\newcommand{\R}{\mathbb{R}}
\newcommand{\N}{\mathbb{N}}
\newcommand{\ep}{\varepsilon}
\newcommand{\pa}{\partial}
\DeclareMathOperator{\lifespan}{LifeSpan}
\newtheorem{theorem}{Theorem}[section]
\newtheorem{lemma}[theorem]{Lemma}
\newtheorem{proposition}[theorem]{Proposition}
\theoremstyle{remark}
\newtheorem{remark}{Remark}[section]
\theoremstyle{definition}
\newtheorem{definition}{Definition}[section]
\numberwithin{equation}{section}
\def\@cite#1#2{[{{\bfseries #1}\if@tempswa , #2\fi}]}
\begin{document}
\begin{center}
\Large{{\bf
Life-span of solutions to semilinear wave equation 
\\with time-dependent critical damping 
\\
for specially localized initial data
}}
\end{center}

\vspace{5pt}

\begin{center}
Masahiro Ikeda%
\footnote{
Department of Mathematics, Faculty of Science and Technology, Keio University, 3-14-1 Hiyoshi, Kohoku-ku, Yokohama, 223-8522, Japan/Center for Advanced Intelligence Project, RIKEN, Japan, 
E-mail:\ {\tt masahiro.ikeda@keio.jp/masahiro.ikeda@riken.jp}}
and
Motohiro Sobajima%
\footnote{
Department of Mathematics, 
Faculty of Science and Technology, Tokyo University of Science,  
2641 Yamazaki, Noda-shi, Chiba, 278-8510, Japan,  
E-mail:\ {\tt msobajima1984@gmail.com}}
\end{center}

\newenvironment{summary}{\vspace{.5\baselineskip}\begin{list}{}{%
     \setlength{\baselineskip}{0.85\baselineskip}
     \setlength{\topsep}{0pt}
     \setlength{\leftmargin}{12mm}
     \setlength{\rightmargin}{12mm}
     \setlength{\listparindent}{0mm}
     \setlength{\itemindent}{\listparindent}
     \setlength{\parsep}{0pt}
     \item\relax}}{\end{list}\vspace{.5\baselineskip}}
\begin{summary}
{\footnotesize {\bf Abstract.}
This paper is concerned with the blowup phenomena for 
initial value problem of semilinear wave equation 
with critical time-dependent damping term
\begin{equation}\label{DW}
\tag*{(DW)}
\begin{cases}
\pa_t^2 u(x,t) -\Delta u(x,t) + \dfrac{\mu}{1+t}\pa_t u(x,t)=|u(x,t)|^p, 
& (x,t)\in \R^N \times (0,T),
\\
u(x,0)=\ep f(x), 
& x\in \R^N,
\\ 
\pa_t u(x,0)=\ep g(x),  
& x\in \R^N,
\end{cases}
\end{equation}
where $N\in \N$, $\mu\in [0,\frac{N^2+N+2}{N+2})$ and $\ep>0$ is a parameter 
describing the smallness of initial data. Given data $f,g$ are compactly supported in a small area inside of the unit ball. 
The result is the sharp upper bound of 
lifespan of solution $u$ with respect to the small parameter $\ep$ 
when $p_F(N)\leq p\leq p_0(N+\mu)$, where 
$p_F(N)$ denotes the Fujita exponent for 
the nonlinear heat equations
and $p_0(n)$ denotes 
the Strauss exponent for nonlinear wave equation \ref{DW} in $n$-dimension with $\mu=0$. 
Consequently, by connecting 
the result of D'Abbicco--Lucente--Reissig 
\cite{DLR15}, 
our result clarifies 
the threshold exponent $p_0(N+\mu)$
for dividing blowup phenomena and global existence of small solutions 
when $N=3$.  
The crucial idea is to construct suitable 
test functions satisfying the conjugate linear equation 
$\pa_t^2\Phi-\Delta \Phi-\pa_t(\frac{\mu}{1+t}\Phi)=0$
of \ref{DW} including the Gauss hypergeometric functions; 
note that  
the construction of test functions 
is different from Zhou--Han \cite{ZH14}.
}
\end{summary}

{\footnotesize{\it Mathematics Subject Classification}\/ (2010): Primary: 35L70.}

{\footnotesize{\it Key words and phrases}\/: %
Wave equation with scale invariant time-dependent damping, 
Small data blowup, Strauss exponent, 
Critical and subcritical case, 
The Gauss hypergeometric functions.}

\section{Introduction}
In this paper we consider the blowup phenomena for 
initial value problem of semilinear 
wave equation with scale-invariant damping term of time-dependent type as follows:
\begin{equation}\label{ndw}
\begin{cases}
\pa_t^2 u(x,t) -\Delta u(x,t) + b(t)\pa_t u(x,t)=|u(x,t)|^p, 
& (x,t)\in \R^N \times (0,T),
\\
u(x,0)=\ep f(x), 
& x\in \R^N,
\\ 
\pa_t u(x,0)=\ep g(x),  
& x\in \R^N. 
\end{cases}
\end{equation}
where $N\in\N$, $b(t)=\mu (1+t)^{-1}$ $(\mu\geq 0)$, 
$\ep>0$ is a small parameter
and 
$f,g$ 
are smooth nonnegative functions satisfying $f+g\not\equiv 0$
with a specially localized support
\begin{align}
\label{ini.supp}
{\rm supp}(f,g)\subset \overline{B(0,r_0)}=\{x\in\R^N\;;\;|x|\leq r_0\}
\end{align}
for some $r_0<1$. 

The purpose of the present paper is 
to prove the blowup phenomena for solutions of \eqref{ndw} 
with initial data satisfying (\ref{ini.supp})
and to give the sharp upper bound for lifespan of them.

We first recall the local well-posedness of \eqref{ndw}.
The following 
is well-known (e.g, Cazenave--Haraux \cite{CHbook}).
\begin{proposition}
\label{prop:exi-uni}
Let $1\le p<\infty$ for $N=1,2$ and 
$1\le p<\frac{N}{N-2}$ for $N\geq 3$. 
Then for every $(f,g)\in H^1(\R^N)\cap L^2(\R^N)$, 
there exist $T>0$ and a unique solution $u$ of \eqref{ndw} 
on $[0,T]$ in the following sense: 
\[
u\in X_T=
C([0,T];H^1(\R^N))
\cap
C^1([0,T];L^2(\R^N))
\cap 
C^2([0,T];H^{-1}(\R^N))
\]
and $\pa_t^2u-\Delta u+b(t)\pa_t u=|u|^p$ in $H^{-1}(\R^N)$. 
\end{proposition}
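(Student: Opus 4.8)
The plan is to recast \eqref{ndw} as a semilinear abstract Cauchy problem on the energy space $\mathcal{H}=H^1(\R^N)\times L^2(\R^N)$ and to solve it by the contraction mapping principle. Setting $U=(u,\pa_t u)$ and
\[
A=\begin{pmatrix} 0 & 1 \\ \Delta & 0\end{pmatrix},
\qquad
F(t,U)=\begin{pmatrix} 0 \\ -b(t)\,\pa_t u + |u|^p\end{pmatrix},
\]
the operator $A$ generates a strongly continuous group $\{e^{tA}\}_{t\in\R}$ on $\mathcal{H}$ (the free wave propagator), uniformly bounded on every compact time interval. I would then seek a mild solution satisfying the Duhamel formula
\[
U(t)=e^{tA}U_0+\int_0^t e^{(t-s)A}F(s,U(s))\,ds,
\qquad U_0=(\ep f,\ep g).
\]
Since $b(t)=\mu(1+t)^{-1}$ is smooth and bounded by $\mu$ on $[0,T]$, the damping enters only as a bounded, time-continuous linear perturbation mapping $\mathcal{H}$ into $\{0\}\times L^2\subset\mathcal{H}$, so it contributes no essential difficulty.

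The core of the argument is the nonlinear estimate showing that $u\mapsto|u|^p$ is locally Lipschitz from $H^1(\R^N)$ into $L^2(\R^N)$. For this I would invoke the Sobolev embedding $H^1(\R^N)\hookrightarrow L^{2p}(\R^N)$, which holds precisely in the stated range ($1\le p<\infty$ for $N=1,2$, and for $N\ge3$ the condition $p<\frac{N}{N-2}$ gives $2p<\frac{2N}{N-2}=2^\ast$, strictly subcritical), together with the elementary inequality $\big||a|^p-|b|^p\big|\le C_p\big(|a|^{p-1}+|b|^{p-1}\big)|a-b|$ valid for $p\ge1$. Hölder's inequality then yields
\[
\big\||u|^p-|v|^p\big\|_{L^2}
\le C_p\big(\|u\|_{L^{2p}}^{p-1}+\|v\|_{L^{2p}}^{p-1}\big)\|u-v\|_{L^{2p}}
\le C\big(\|u\|_{H^1}^{p-1}+\|v\|_{H^1}^{p-1}\big)\|u-v\|_{H^1},
\]
so that $F(t,\cdot)$ is Lipschitz on bounded subsets of $\mathcal{H}$, uniformly for $t\in[0,T]$.

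With these ingredients I would run the standard fixed-point scheme: on the closed ball of radius $2\|U_0\|_{\mathcal{H}}\sup_{[0,T]}\|e^{tA}\|$ in $C([0,T];\mathcal{H})$, the map $\Phi(U)(t)=e^{tA}U_0+\int_0^t e^{(t-s)A}F(s,U(s))\,ds$ sends the ball into itself and is a contraction once $T=T(\|U_0\|_{\mathcal{H}})$ is chosen small, the contraction factor coming from the time integral $\int_0^t ds$. Banach's theorem then produces a unique $U=(u,v)\in C([0,T];\mathcal{H})$, whence $u\in C([0,T];H^1)$ and $\pa_t u=v\in C([0,T];L^2)$, i.e. $u\in C^1([0,T];L^2)$. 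Finally, rewriting the equation as $\pa_t^2u=\Delta u-b(t)\pa_t u+|u|^p$ and noting that the right-hand side lies in $C([0,T];H^{-1})$ (the first term because $u\in C([0,T];H^1)$, the remaining terms because they lie in $C([0,T];L^2)$) gives $u\in C^2([0,T];H^{-1})$ and validity of the equation in $H^{-1}(\R^N)$, establishing $u\in X_T$. I expect the only delicate point to be the continuity and local Lipschitz property of the Nemytskii map $u\mapsto|u|^p$ at the Sobolev level; the strict subcriticality for $N\ge3$ keeps $2p$ strictly below $2^\ast$, which guarantees these properties together with the self-mapping of the iteration.
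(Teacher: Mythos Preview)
Your argument is correct and is precisely the standard route: the free wave group on $\mathcal{H}=H^1\times L^2$, the damping as a bounded perturbation, the local Lipschitz property of $u\mapsto|u|^p$ from $H^1$ to $L^2$ via $H^1\hookrightarrow L^{2p}$ in the stated subcritical range, and a Banach fixed point on a small time interval; the recovery of the $C^2([0,T];H^{-1})$ regularity from the equation is also handled properly.

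The paper, however, does not prove this proposition at all: it is stated as well known and simply attributed to Cazenave--Haraux \cite{CHbook}. So there is no ``paper's own proof'' to compare against; your write-up is essentially the argument one finds in that reference, and nothing more is required here.
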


The study of blowup phenomena of solutions to \eqref{ndw} 
with $\mu=0$ has been studied by many mathematicians 
(e.g., \cite{John79,Kato80,Strauss81,YZ06,Zhou07} and their reference therein). 
In particular, 
the threshold exponent 
for dividing blowup phenomena and global existence of small solutions 
is clarified as $p_0(N)$ (for example, see 
Yordanov--Zhang \cite{YZ06} and Zhou \cite{Zhou07}), 
where $p_0(n)$ is called Strauss exponent and 
is given as the positive root of the quadratic equation 
\[
(n-1)p^2-(n+1)p-2=0.
\]
The sharp upper estimate for lifespan of \eqref{ndw} with $\mu=0$ 
is given in Takamura--Wakasa \cite{TW11}, where 
the precise definition of the lifespan of solutions to \eqref{ndw} as follows: 
\begin{definition}\label{def:lifespan}
We denote $\lifespan(u)$ as the maximal existence time for solution of \eqref{ndw}, that is, 
\[
\lifespan(u)=\sup\{T>0\;;\;u\in X_T\ \& \ \text{$u$ is a solution of \eqref{ndw} in $(0,T)$}\},
\]
where 
$X_T=C([0,T];H^1(\R^N))
\cap
C^1([0,T];L^2(\R^N))
\cap 
C^2([0,T];H^{-1}(\R^N))
$, as in Proposition \ref{prop:exi-uni}. 
\end{definition}
Later, the alternative approach for deriving 
the estimate
in the critical case $p=p_0(N)$ appears in Zhou--Han \cite{ZH14}.
Their approach could be generalized for dealing with 
the blowup phenomena of 
the singular problem
\begin{equation}\label{ndw2}
\begin{cases}
\pa_t^2 u(x,t) -\Delta u(x,t) + \dfrac{V_0}{|x|}\pa_t u(x,t)=|u(x,t)|^p, 
& (x,t)\in \R^N \times (0,T),
\\
u(x,0)=\ep f(x), 
& x\in \R^N,
\\ 
\pa_t u(x,0)=\ep g(x),  
& x\in \R^N
\end{cases}
\end{equation}
with $N\geq 3$ and $V_0\in [0,\frac{(N-1)^2}{N+1})$ (see \cite{IkedaSobajima1})
(the technique of test function by Zhou--Han was simplified in \cite{IkedaSobajima1}). 

After that the study for the case $\mu>0$ has been 
considered in Wakasugi \cite{Wakasugi14, Wakasugi-PhD}, 
D'Abbicco \cite{DAbbicco15}, 
D'Abbicco--Lecente--Reissig \cite{DLR15},
Wakasa \cite{Wakasa16}
and Lai--Takamura--Wakasa \cite{LTWarxiv} and others. 
In particular, the blowup phenomena 
with certain upper bounds of the lifespan
are proved in the following cases: 
\begin{align*}
\begin{cases}
1<p\leq p_F(N)&\text{for\ }N\in\N, \quad\mu\geq 1,
\\
1<p\leq 1+\frac{2}{N-1+\mu}&\text{for\ }N\in\N,\quad 0<\mu\leq 1,
\end{cases}
\end{align*}
in \cite{Wakasugi14, Wakasugi-PhD}, and 
\begin{align*}
\begin{cases}
1<p\leq p_F(N)&\text{for\ }N=1, \quad \mu=2,
\\
1<p<p_0(N+2)&\text{for\ }N=2,3, \quad\mu=2,
\end{cases}
\end{align*}
in \cite{DLR15} and \cite{Wakasa16}. Here 
$p_F(N)$ denotes the Fujita exponent $1+\frac{2}{N}$. 
Recently, it is proved in \cite{LTWarxiv} that 
\begin{align*}
p_F(N)<p\leq p_0(N+2\mu)\quad \text{for\ }N\geq 2, \quad 0<\mu<\frac{N^2+N+2}{2(N+2)}. 
\end{align*}
On the one hand, global existence of small solutions (SDGE) 
is obtained for the following cases in 
\cite{DAbbicco15}:
\begin{align*}
\begin{cases}
p_F(1)<p<\infty &\text{for\ }N=1, \quad\mu\geq \frac{5}{3},
\\
p_F(2)<p<\infty &\text{for\ }N=2, \quad\mu\geq 3,
\\
p_F(N)<p\leq \frac{N}{N-2}&\text{for\ }N\geq 3, \quad\mu\geq N+2.
\end{cases}
\end{align*}
Moreover, in \cite{DLR15}
\begin{align*}
\begin{cases}
p_F(2)=p_0(4)=2<p &\text{for\ }N=2, \quad\mu=2,
\\
p_0(5)<p &\text{for\ }N=3, \quad\mu=2. 
\end{cases}
\end{align*}
We remark that if $\mu=2$ and $N=2,3$, 
then threshold for dividing the blowup phenomena and 
global existence of small solutions is clarified as $p_0(N+2)$. 
However, such a threshold for the other cases cannot be obtained so far. 

The purpose of the present paper is to give the explicit 
value for the threshold for blowup phenomena 
and to give the sharp upper bound of lifespan of solutions to 
\eqref{ndw}. 

To state the main result, we introduce the following notation.
\begin{definition}\label{def:strauss.exp}
We introduce the following quadratic polynomial
\[
\gamma(n;p)=2+(n+1)p-(n-1)p^2
\]
and denote $p_0(n)$ for $n>1$ as the positive root of 
the quadratic equation $\gamma(n;p)=0$ and additionally set $p_0(1)=\infty$. 
We also put 
\[
\mu_*=\frac{N^2+N+2}{N+2}.
\]
It is worth noticing that 
if $\mu\in [0,\mu_*)$, then $p_F(N)<p_0(N+\mu)$, and $p_F(N)=p_0(N+\mu_*)$. 
\end{definition}

Here we are in a position to state our main result of the present paper. 

\begin{theorem}\label{main}
Assume $p_F<p\leq p_0(N+\mu)$ with 
\begin{align*}
\begin{cases}
0< \mu <\frac{4}{3}
&\text{if }N=1,
\\[10pt]
0\leq  \mu <\mu_* 
&\text{if }N\geq 2.
\end{cases}
\end{align*} 
Then there exists a constant $\ep_0>0$ such that 
for every $\ep\in (0,\ep_0)$, 
the solution $u_\ep$ of \eqref{ndw} satisfies 
$\lifespan(u_\ep)<\infty$. 
Moreover, if $N=1$, then 
for every $\ep\in (0,\ep_0)$, 
\begin{align*}
\lifespan(u_\ep)\leq 
\begin{cases}
\exp[C\ep^{-p(p-1)}]
&
\text{if } p=p_0(1+\mu),
\\
C_\delta\ep^{-\frac{2p(p-1)}{\gamma(N+\mu;p)}-\delta}
&\text{if}\ \max\{3,\frac{2}{\mu}\}\leq p<p_0(1+\mu),
\\
C'_\delta\ep^{-\frac{2(p-1)}{\mu}-\delta} 
& \text{if }0<\mu<\frac{2}{3}, 3\leq p<\frac{2}{\mu}
\end{cases}
\end{align*}
and if $N\geq 2$, then 
for every $\ep\in (0,\ep_0)$, 
\begin{align*}
\lifespan(u_\ep)\leq 
\begin{cases}
\exp[C\ep^{-p(p-1)}]
&
\text{if } p=p_0(N+\mu),
\\
C_\delta\ep^{-\frac{2p(p-1)}{\gamma(N+\mu;p)}-\delta}
&\text{if}\ p_0(N+2+\mu)\leq p<p_0(N+\mu),
\\
C'_\delta\ep^{-1-\delta} 
& \text{if }p<p_0(N+2+\mu).
\end{cases}
\end{align*}
Here $C,C_\delta,C'_\delta$ are constants independent of $\ep>0$ 
and $\delta>0$ can be chosen arbitrary small. 
\end{theorem}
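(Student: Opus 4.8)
The plan is to run the test-function (weak-formulation) method: pair a solution $u$ of \eqref{ndw} against a nonnegative test function solving the conjugate equation $\pa_t^2\Phi-\Delta\Phi-\pa_t(\frac{\mu}{1+t}\Phi)=0$, and to turn the resulting integral identity into a nonlinear differential or integral inequality that forces $\lifespan(u_\ep)<\infty$. Two functionals are needed, according to whether the blowup is of heat type or of wave type. For the lowest part of the admissible range it suffices to take $G_0(t)=\int_{\R^N}u(x,t)\,dx$. Finite speed of propagation together with \eqref{ini.supp} keeps $u(\cdot,t)$ supported in $\{|x|\le 1+t\}$, so integrating \eqref{ndw} in $x$, bounding $\int|u|^p\,dx$ from below by $|G_0|^p$ via H\"older's inequality on this ball, and multiplying by the integrating factor $(1+t)^{\mu}$ gives
\[ \frac{d}{dt}\bigl[(1+t)^{\mu}G_0'(t)\bigr]\ge c\,(1+t)^{\mu-N(p-1)}\,|G_0(t)|^{p},\qquad G_0(0)\gtrsim\ep,\ G_0'(0)\ge0. \]
An elementary ODE comparison then produces both the blowup and the heat-type lifespan bounds $C'_\delta\ep^{-1-\delta}$ for $N\ge2$ and $C'_\delta\ep^{-2(p-1)/\mu-\delta}$ for $N=1$.

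For the wave-type regime up to $p_0(N+\mu)$ I would instead use the weighted functional $G_1(t)=\int_{\R^N}u(x,t)\Phi(x,t)\,dx$, and here the construction of $\Phi$ is the crux. Starting from the spatial profile $\phi(x)=\int_{S^{N-1}}e^{x\cdot\omega}\,d\omega$, which satisfies $\Delta\phi=\phi$, one separates variables and is reduced to a second-order ODE in $s=1+t$ with a regular singular point; solving it explicitly yields a test function expressed through the Gauss hypergeometric function, which I would normalize to be positive on the backward light cone and to carry prescribed algebraic asymptotics in $s$. It is precisely the exponent in these asymptotics that makes the effective dimension $N+\mu$ surface. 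Pairing $|u|^p$ against $\Phi$ and applying H\"older's inequality with the weight $\Phi^{p'}$, $p'=p/(p-1)$, whose spacetime integral over $\supp u(\cdot,t)$ is controlled by the hypergeometric asymptotics, converts the weak identity (after integrating by parts in spacetime and using the conjugate equation) into an inequality of the form
\[ G_1(t)\ge c\,\ep+c\int_0^{t}(t-\tau)\,(1+\tau)^{-\beta}\,G_1(\tau)^{p}\,d\tau, \]
with $\beta$ calibrated so that $\gamma(N+\mu;p)=0$ is exactly the borderline.

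A Kato-type iteration then closes the argument. When $\gamma(N+\mu;p)>0$ the iterated lower bounds amplify fast enough to blow up at a time of order $\ep^{-2p(p-1)/\gamma(N+\mu;p)}$, which, together with the standard small loss $\delta$ from the iteration, gives the polynomial lifespan bound in the subcritical range $p_0(N+2+\mu)\le p<p_0(N+\mu)$ (and its $N=1$ analogue). In the critical case $p=p_0(N+\mu)$, i.e.\ $\gamma(N+\mu;p)=0$, the weight $(1+\tau)^{-\beta}$ lies exactly on the threshold, so each iteration gains only a logarithmic factor; slicing the time interval dyadically and summing these logarithms upgrades the estimate to the exponential lifespan $\exp[C\ep^{-p(p-1)}]$.

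The main obstacle is the construction and, above all, the sharp asymptotic analysis of the hypergeometric test function $\Phi$. One needs simultaneously its positivity on the relevant region and the exact power of $(1+t)$ governing $\int\Phi^{p'}\,dx$ on $\supp u(\cdot,t)$, since any slack there would shift $\beta$ and spoil both the sharpness of the threshold $p_0(N+\mu)$ and the lifespan exponents. A secondary difficulty is the logarithmic bookkeeping in the critical-case slicing, which must be carried out carefully to reach the exponential---rather than merely polynomial---upper bound.
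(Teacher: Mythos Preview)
Your proposal contains a genuine gap at the most delicate point: the construction of the test function $\Phi$. You propose separation of variables with the spatial profile $\phi(x)=\int_{S^{N-1}}e^{x\cdot\omega}\,d\omega$ and a time factor $\lambda(1+t)$; but this is precisely the ansatz of Lai--Takamura--Wakasa, and it produces the threshold $p_0(N+2\mu)$ rather than $p_0(N+\mu)$. The reason is that after H\"older's inequality the controlling quantity is $\int_{|x|\le r_0+t}\phi(x)^{p'}\,dx$, whose growth is governed only by the wave-type asymptotics $\phi(x)\sim |x|^{-(N-1)/2}e^{|x|}$; the time factor $\lambda$ then absorbs the damping in a way that overcounts $\mu$. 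The effective dimension $N+\mu$ does \emph{not} surface from this construction. (Incidentally, the scalar ODE for $\lambda$ does not reduce to the Gauss hypergeometric equation either; it is of confluent/Bessel type.)

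What the paper does instead is to abandon separation of variables and build \emph{self-similar} solutions of the conjugate equation: $\Phi_\beta(x,t)=(1+t)^{1-\beta}\psi_{\beta,\mu}\bigl(|x|^2/(1+t)^2\bigr)$ with $\psi_{\beta,\mu}(z)=F\bigl(\tfrac{\beta}{2},\tfrac{\beta-1+\mu}{2},\tfrac{N}{2};z\bigr)$. The hypergeometric profile is what interlocks the spatial decay with the time decay and makes the boundary layer $1-|x|/(1+t)$ appear with the exponent $\tfrac{N+1-\mu}{2}-\beta$; choosing $\beta$ just below $\tfrac{N+1-\mu}{2}-\tfrac{1}{p}$ is exactly what forces $\gamma(N+\mu;p)$ into the lifespan exponent. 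The paper then does not run a Kato iteration at all: it works with the functionals $G_\beta(t)=\int|u|^p\Phi_\beta\,dx$, $H_\beta(t)=\int_0^t(t-s)(1+s)G_\beta(s)\,ds$ and $J_\beta(t)=\int_0^t(1+s)^{-3}H_\beta(s)\,ds$, reduces everything to a second-order ODE inequality for $J_\beta$, and closes via an ODE blow-up lemma after the change of variables $\sigma=\tfrac{2}{\lambda}(1+t)^{\lambda/2}$ in the subcritical case and $\sigma=\log(1+t)$ in the critical case. Your auxiliary functional $G_0(t)=\int u\,dx$ is not used; all regimes, including the heat-type bounds $\ep^{-1-\delta}$ and $\ep^{-2(p-1)/\mu-\delta}$, come out of the same hypergeometric machinery by varying the admissible range of $\beta$ (this is the content of the set $S_N$ in the paper).
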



We would like to show the relation between several previous works 
by the following three figures.


\bigskip 

\begin{center}
\input{1.fig}

\bigskip 

\input{2.fig}

\bigskip 

\input{3.fig}

\bigskip 

\end{center}


\begin{remark}
In the one dimensional case, Kato proved in \cite{Kato80} 
that for every $1<p<\infty$, the solution of \eqref{ndw} with $\mu=0$ 
blowup at finite time. Therefore the Strauss exponent
can be understood as $p_0(1)=\infty$. On the other hand, 
Theorem \ref{main} means that 
if $\mu>0$, then the Strauss exponent is finite 
and one can expect that $p_0(1+\mu)$ is the threshold for $p$
to divide the blowup phenomena and global existence of small solutions. 
\end{remark}

\begin{remark}
If $N\geq 3$, then we can take $\mu=2$ in Theorem \ref{main}.
In this case, the threshold can be $p_0(N+2)$ 
which is already obtained by D'Abbicco--Lecente--Reissig \cite{DLR15} 
only when $N=3$.
Theorem \ref{main} extends it to the general $\mu\in [0,\mu_*)$ 
and the result connects the threshold number continuously from $\mu=0$ to $\mu>0$. 
\end{remark}

\begin{remark}
Lai--Takamura--Wakasa proved in \cite{LTWarxiv} that 
for $p_F(N)\leq p< p_0(N+2\mu)$ with $\mu\in [0,\frac{\mu_*}{2})$, 
the lifespan of solution $u_\ep$ is estimated above by 
$
\ep^{-\frac{2p(p-1)}{\gamma(N+2\mu;p)}}.
$
Therefore Theorem \ref{main} gives a wider range of $(p,\mu)$
for blowup phenomena 
and better estimates of lifespan $\ep^{-\frac{2p(p-1)}{\gamma(N+\mu;p)}-\delta}$
than that in \cite{LTWarxiv}. 
Moreover, since we could prove 
the almost global existence of solutions when $p=p_0(N+\mu)$, 
we may find the threshold $p_0(N+\mu)$
for dividing the blowup phenomena and global existence of small solutions 
and this is true when $N=3$ and $\mu=2$. 
\end{remark}

The present paper is organized as follows. 
The special solution of conjugate linear equation of \eqref{ndw} is 
constructed and analysed in Section 2. 
In Section 3, 
by using the the function constructed in Section 2 
as a test function, 
we prove Theorem \ref{main} 
for subcritical case and critical case separately. 
Although the proof is similar to the one 
in \cite{IkedaSobajima1}, 
we give a complete proof for self-containedness.  

\section{Special solutions of linear damped wave equation}

In this section we construct special solutions of linear damped wave equation. 

Here we start with properties for  self-similar solutions of \eqref{ndw}. 
First we set 
\[
\mathcal{Q}_0=\{(x,t)\in \R^{N+1}\;;\;|x|<t\}.
\]

\begin{lemma}\label{lem:self-sim}
Let  $\Psi\in C^2(\mathcal{Q}_0)$ be radially symmetric. 
Assume that $\Psi$ satisfies $\pa_t^2\Psi-\Delta \Psi+\frac{\mu'}{t}
\pa_t\Psi=0$ and 
\[
\Psi(x,t)=\lambda^{\beta}\Psi(\lambda x,\lambda t), \quad (x,t)\in \mathcal{Q}_0
\]
for all $\lambda>0$. Then $\Psi$ can be represented by 
\[
\Psi(x,t)=t^{-\beta}\psi\left(\frac{|x|^2}{t^2}\right)
\]
with $\psi\in C^2([0,1))$ satisfying the Gauss hypergeometric differential equation
\begin{equation}\label{Gauss}
z(1-z)\psi''(z)+\left(\frac{N}{2}-\left(1+\frac{\beta}{2}+\frac{\beta+1-\mu'}{2}\right)z\right)\psi'(z)-\frac{\beta(\beta+1-\mu')}{4}\psi(z)=0, \quad z\in (0,1).
\end{equation}
\end{lemma}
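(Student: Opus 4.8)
The plan is to exploit the scaling invariance to collapse $\Psi$ to a function of a single variable, and then to convert the PDE into an ODE by a direct chain-rule computation. First I would fix the representation: setting $\lambda=1/t$ in the homogeneity relation $\Psi(x,t)=\lambda^\beta\Psi(\lambda x,\lambda t)$ gives $\Psi(x,t)=t^{-\beta}\Psi(x/t,1)$ for every $(x,t)\in\mathcal{Q}_0$. Since $\Psi$ is radially symmetric, $\Psi(x/t,1)$ depends only on $|x|/t$, so writing $z=|x|^2/t^2$ and $\psi(z):=\Psi(\sqrt{z}\,\omega,1)$ for any unit vector $\omega$ (well defined by radial symmetry) yields $\Psi(x,t)=t^{-\beta}\psi(|x|^2/t^2)$. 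On $\mathcal{Q}_0$ one has $|x|<t$, so $z$ ranges over $[0,1)$. The regularity $\psi\in C^2([0,1))$ is inherited from $\Psi\in C^2(\mathcal{Q}_0)$; the point is that $z$ is expressed through $|x|^2$ rather than $|x|$, so the change of variables is smooth up to and including the origin $x=0$ (that is, $z=0$) and no spurious singularity appears there.

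Next I would substitute the ansatz $\Psi(x,t)=t^{-\beta}\psi(z)$, with $z=r^2/t^2$ and $r=|x|$, into $\pa_t^2\Psi-\Delta\Psi+\frac{\mu'}{t}\pa_t\Psi=0$. For the spatial part I use the radial Laplacian $\Delta=\pa_r^2+\frac{N-1}{r}\pa_r$; from $\pa_r z=2r/t^2$ one finds, after replacing $r^2=zt^2$, that $\Delta\Psi=t^{-\beta-2}\bigl(4z\psi''+2N\psi'\bigr)$. For the time part, using $\pa_t z=-2z/t$, a single application of the chain rule gives $\pa_t\Psi=t^{-\beta-1}\bigl(-\beta\psi-2z\psi'\bigr)$, and a second differentiation (keeping in mind that $z$ itself depends on $t$) gives $\pa_t^2\Psi=t^{-\beta-2}\bigl(4z^2\psi''+(4\beta+6)z\psi'+\beta(\beta+1)\psi\bigr)$, again after substituting $r^2=zt^2$ and $r^4=z^2t^4$.

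Finally I would assemble the three contributions, factor out the common power $t^{-\beta-2}$, and collect the coefficients of $\psi''$, $\psi'$ and $\psi$. The second-order terms combine to $4z^2\psi''-4z\psi''=-4z(1-z)\psi''$; the first-order terms give $\bigl[-2N+(4\beta+6-2\mu')z\bigr]\psi'$; and the zeroth-order terms give $\beta(\beta+1-\mu')\psi$. Dividing the resulting identity by $-4$ produces $z(1-z)$ in front of $\psi''$, the coefficient $\frac{N}{2}-\frac{2\beta+3-\mu'}{2}z$ in front of $\psi'$, and $-\frac{\beta(\beta+1-\mu')}{4}$ in front of $\psi$; one then checks the algebraic identity $\frac{2\beta+3-\mu'}{2}=1+\frac{\beta}{2}+\frac{\beta+1-\mu'}{2}$, which is exactly \eqref{Gauss}. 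I expect the only genuinely delicate point to be the regularity claim at $z=0$, where the use of $|x|^2$ in place of $|x|$ is essential; the remainder is a bookkeeping exercise in the chain rule whose one pitfall is the $t$-dependence of $z$ in the computation of $\pa_t^2\Psi$.
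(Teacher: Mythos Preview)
Your proof is correct and follows essentially the same route as the paper: set $\lambda=1/t$ to obtain the representation $\Psi(x,t)=t^{-\beta}\psi(|x|^2/t^2)$, compute $\partial_t\Psi$, $\partial_t^2\Psi$, and $\Delta\Psi$ by the chain rule (your formulas coincide with the paper's), and divide the resulting identity by $-4$ to reach \eqref{Gauss}. Your additional remark on why $\psi\in C^2$ up to $z=0$ is a welcome clarification that the paper omits.
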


\begin{proof}
By assumption we can take $\lambda=t^{-1}$ and then 
we have 
\[
\Psi(x,t)=t^{-\beta}\psi\left(\frac{|x|^2}{t^2}\right),
\]
where we have put $\psi(z)=\Psi(\sqrt{z},1)$. Noting that 
\begin{align*}
\pa_t \Psi(x,t)
&=
t^{-\beta-1}
\left(-\beta \psi\left(\frac{|x|^2}{t^2}\right)- \frac{2|x|^2}{t^2}\psi'\left(\frac{|x|^2}{t^2}\right)\right)
=
t^{-\beta-1}(-\beta \psi(z)- 2z\psi'(z))
\end{align*}
and 
\[
\pa_t^2 \Psi(x,t)
=
t^{-\beta-2}(\beta(\beta+1)\psi(z)+ (4\beta+6)z\psi'(z)+4z^2\psi''(z)).
\]
Moreover, 
\begin{align*}
\Delta \Psi(x,t)
=t^{-\beta}
\left(
\frac{2N}{t^2}\psi'\left(\frac{|x|^2}{t^2}\right)
+
\frac{4|x|^2}{t^4}\psi''\left(\frac{|x|^2}{t^2}\right)
\right)
=t^{-\beta-2}
\left(
2N\psi'(z)
+
4z\psi''(z)
\right).
\end{align*}
Combining the above identities, we see that
\begin{align*}
t^{\beta+2}
\left(\pa_t^2\Psi-\Delta \Psi+\frac{\mu'}{t}\pa_t\Psi\right)
&=
4z(z-1)\psi''(z)
+
\left(-2N+(4\beta+6-2\mu')z\right)\psi'(z)
+ 
\beta(\beta+1-\mu')\psi(z) 
\\
&=
-4\left[
z(1-z)\psi''(z)
+
\left(\frac{N}{2}+\left(\beta+\frac{3}{2}-\frac{\mu'}{2}\right)z\right)\psi'(z)
- 
\frac{\beta(\beta+1-\mu')}{4}\psi(z) 
\right].
\end{align*}
This implies that $\psi$ satisfies \eqref{Gauss}.
\end{proof}

\begin{definition}\label{def:phi}
For $\beta>0$ and $\mu\in\R$, set 
\[
\Psi_{\beta,\mu}(x,t)
=
(1+t)^{-\beta}
\psi_{\beta,\mu}\left(\frac{|x|^2}{(1+t)^2}\right), 
\quad 
\psi_{\beta,\mu}(z)=F\left(\frac{\beta}{2},\frac{\beta-1+\mu}{2},\frac{N}{2};z\right),
\]
where $F(a,b,c;z)$ is the Gauss hypergeometric function given by 
\begin{align*}
F(a,b,c;z)=\sum_{n=0}^\infty\frac{(a)_n(b)_n}{(c)_n}\,\frac{z^n}{n!}
\end{align*}
with $(d)_0=1$ and $(d)_n=\prod_{k=1}^n (d+k-1)$ for $n\in\N$.
\end{definition}

Here some properties of $\Psi_{\beta,\mu}$ is collected 
in the following lemma, which are due to well-known fact for 
the Gauss hypergeometric functions
(see e.g., Beals--Wong \cite{BW}). 
\begin{lemma}\label{lem:psi-prop}
{\bf (i)} For every $\beta>0$ and $\mu\in\R$, 
\begin{align*}
\pa_t^2\Psi_{\beta,\mu}(x,t)
-\Delta \Psi_{\beta,\mu}(x,t)
+\frac{2-\mu}{1+t}\pa_t\Psi_{\beta,\mu}(x,t)=0, \quad (x,t)\in \mathcal{Q}_1.
\end{align*}
\noindent
{\bf (ii)} 
If $\max\{0,1-\mu\} <\beta< \frac{N+1-\mu}{2}$,  
then there exists a constant $c_{\beta,\mu}>1$ such that 
\[
1 \leq \psi_{\beta,\mu}(z)\leq c_{\beta,\mu}.
\] 
\noindent
{\bf (iii)} 
If $\beta > \frac{N+1-\mu}{2}$,
then there exists a constant $c'_{\beta,\mu}>1$ such that 
\[
(c'_{\beta,\mu})^{-1}
\left(1-
\sqrt{z}\right)^{\frac{N+1-\mu}{2}-\beta}
\leq \psi_{\beta,\mu}(z)\leq 
c_{\beta,\mu} 
\left(1-
\sqrt{z}\right)^{\frac{N+1-\mu}{2}-\beta}.
\] 
\noindent
{\bf (iv)} 
For every $\beta>0$ and $\mu\in\R$, 
$\psi_{\beta+2,\mu-2}(z)\geq \psi_{\beta,\mu}(z)$ and 
\[
\pa_t\Psi_{\beta,\mu}(x,t)
=
-\beta (1+t)\Psi_{\beta+2,\mu-2}(x,t).
\]
\end{lemma}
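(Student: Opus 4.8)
The plan is to read everything off from the identification of $\psi_{\beta,\mu}$ as a Gauss hypergeometric function with parameters $a=\frac{\beta}{2}$, $b=\frac{\beta-1+\mu}{2}$, $c=\frac{N}{2}$, and then to invoke the standard properties of $F(a,b,c;\cdot)$ collected, e.g., in Beals--Wong \cite{BW}. For (i) I would first note that with $\mu'=2-\mu$ one has $a+b+1=1+\frac{\beta}{2}+\frac{\beta+1-\mu'}{2}$ and $ab=\frac{\beta(\beta+1-\mu')}{4}$, so that $\psi_{\beta,\mu}$ is exactly the hypergeometric solution of \eqref{Gauss} for this $\mu'$. Running Lemma \ref{lem:self-sim} in reverse then shows that $t^{-\beta}\psi_{\beta,\mu}(|x|^2/t^2)$ solves $\pa_t^2\Psi-\Delta\Psi+\frac{2-\mu}{t}\pa_t\Psi=0$ on $\mathcal{Q}_0$; since this equation is invariant under the shift $t\mapsto 1+t$, replacing $t$ by $1+t$ produces $\Psi_{\beta,\mu}$ and the asserted equation with coefficient $\frac{2-\mu}{1+t}$ on $\mathcal{Q}_1$.

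For (ii) and (iii) everything reduces to the behaviour of $F(a,b,c;z)$ on $[0,1)$, governed by the sign of $c-a-b=\frac{N+1-\mu}{2}-\beta$. The lower bound $\psi_{\beta,\mu}\ge 1$ is immediate: under the hypotheses $a>0$, $b>0$, $c>0$, so every coefficient $\frac{(a)_n(b)_n}{(c)_n\,n!}$ is positive and $z\ge 0$, whence the series dominates its $n=0$ term. In the range of (ii) one has $c-a-b>0$, so Gauss's summation theorem gives $\psi_{\beta,\mu}(1)=\frac{\Gamma(c)\Gamma(c-a-b)}{\Gamma(c-a)\Gamma(c-b)}<\infty$, and monotonicity of the positive series yields $1\le\psi_{\beta,\mu}(z)\le\psi_{\beta,\mu}(1)=:c_{\beta,\mu}$. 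In the range of (iii) one has $c-a-b<0$, and the connection formula at the singular point gives $\psi_{\beta,\mu}(z)\sim \frac{\Gamma(c)\Gamma(a+b-c)}{\Gamma(a)\Gamma(b)}(1-z)^{c-a-b}$ as $z\to 1^-$; since $1-z=(1-\sqrt z)(1+\sqrt z)$ with $1+\sqrt z\in[1,2]$, one may replace $(1-z)$ by $(1-\sqrt z)$ at the cost of constants, and the two-sided bound follows by combining this endpoint asymptotics with the continuity and strict positivity of $\psi_{\beta,\mu}$ on each $[0,1-\delta]$.

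For (iv) I would use the contiguous relation $(\theta+a)F(a,b,c;z)=aF(a+1,b,c;z)$, where $\theta=z\frac{d}{dz}$, which follows at once from $(a+1)_n=\frac{a+n}{a}(a)_n$. Since $\psi_{\beta+2,\mu-2}=F(a+1,b,c;\cdot)$ (only the first parameter increases by one), this reads $z\psi_{\beta,\mu}'(z)+\frac{\beta}{2}\psi_{\beta,\mu}(z)=\frac{\beta}{2}\psi_{\beta+2,\mu-2}(z)$, i.e. $\psi_{\beta+2,\mu-2}=\psi_{\beta,\mu}+\frac{2z}{\beta}\psi_{\beta,\mu}'$. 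Computing $\pa_t\Psi_{\beta,\mu}=(1+t)^{-\beta-1}\bigl(-\beta\psi_{\beta,\mu}-2z\psi_{\beta,\mu}'\bigr)$ with $z=|x|^2/(1+t)^2$ then gives the stated derivative identity, and the inequality $\psi_{\beta+2,\mu-2}\ge\psi_{\beta,\mu}$ follows because $\psi_{\beta,\mu}'\ge 0$ (again by positivity of the coefficients, valid once $b\ge 0$).

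I expect the genuine work to be concentrated in (iii): the sharp two-sided control precisely at the characteristic cone $z\to 1$ relies on the connection/asymptotic formulae for $F$ rather than on elementary series manipulations, and one must check that the implied constants are uniform on all of $[0,1)$ and that the parameters stay in the region ($a>0$, $b>0$, $c>0$) where the positivity arguments and the Gamma quotient are meaningful. Parts (i), (ii) and (iv) are then routine verifications once the hypergeometric identification is in place.
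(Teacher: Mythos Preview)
Your proposal is correct and follows essentially the same route as the paper for parts (i)--(iii): invoke Lemma~\ref{lem:self-sim} with $\mu'=2-\mu$ for (i), and read off (ii), (iii) from the standard boundedness (Gauss summation, $c-a-b>0$) and singular asymptotics (connection formula, $c-a-b<0$) of $F(a,b,c;\cdot)$ at $z=1$.

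The only genuine difference is in (iv). The paper derives the key identity $\beta\psi_{\beta,\mu}+2z\psi_{\beta,\mu}'=\beta\psi_{\beta+2,\mu-2}$ indirectly: setting $\psi=\beta\psi_{\beta,\mu}+2z\psi_{\beta,\mu}'$, it manipulates the hypergeometric ODE for $\psi_{\beta,\mu}$ to show that $\psi$ satisfies the hypergeometric ODE with parameters $(a+1,b,c)$, and then identifies $\psi$ by its value at $0$. Your argument is shorter and more transparent: the contiguous relation $(\theta+a)F(a,b,c;z)=aF(a+1,b,c;z)$ is a one-line series computation, and since $\psi_{\beta+2,\mu-2}=F(a+1,b,c;\cdot)$ it gives the identity immediately. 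Likewise, for the inequality $\psi_{\beta+2,\mu-2}\ge\psi_{\beta,\mu}$ the paper uses monotonicity of $F$ in its first parameter, while you deduce it from $\psi_{\beta+2,\mu-2}=\psi_{\beta,\mu}+\tfrac{2z}{\beta}\psi_{\beta,\mu}'$ and $\psi_{\beta,\mu}'\ge 0$; these are equivalent, both relying on the positivity of the series coefficients (hence on $b\ge 0$, which you correctly flag and which holds in every application in the paper since one works with $\beta>\max\{0,1-\mu\}$).
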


\begin{proof}
The assertion {\bf (i)} is a direct consequence of Lemma \ref{lem:self-sim}. 
Moreover, we see from the behavior of $F(a,b,c;z)$ that 
{\bf (ii)} and {\bf (iii)} hold.
For {\bf (iv)}, the first assertion directly follows from the definition 
of $F(a,b,c;z)$. Indeed, since $F$ is monotone increasing with respect to 
$a$ and then we have 
\[
\psi_{\beta,\mu}(z)=F\left(\frac{\beta}{2},\frac{\beta-1+\mu}{2},\frac{N}{2};z\right)
\leq 
F\left(\frac{\beta+2}{2},\frac{\beta-1+\mu}{2},\frac{N}{2};z\right)
\leq 
\psi_{\beta+2,\mu-2}(z).
\]
We prove the second assertion $\pa_t\Psi_{\beta,\mu}=-\beta (1+t)\Psi_{\beta+2,\mu-2}$.
Since it follows from the proof of Lemma \ref{lem:self-sim} that 
\[
(1+t)^{-1}
\pa_t\Psi_{\beta,\mu}(x,t)=
-(1+t)^{-\beta-2}(\beta \psi_{\beta,\mu}(z) + 2z\psi_{\beta,\mu}'(z))
\]
with $z=|x|^2/(1+t)^2$. Therefore it suffices to show that 
\begin{equation}\label{gauss.aux}
\beta \psi_{\beta,\mu}(z) + 2z\psi_{\beta,\mu}'(z)=
\beta \psi_{\beta+2,\mu-2}(z),
\quad 
 z\in (0,1).
\end{equation}
Put $\psi(z)=\beta \psi_{\beta,\mu}(z) + 2z\psi_{\beta,\mu}'(z)$ for $z\in [0,1)$. 
Then by the definition of $F(\cdot,\cdot,\cdot;z)$, 
we have $\psi(0)=\beta$. On the other hand, we see from 
the Gauss hypergeometric equation with 
$a=\frac{\beta}{2}$, $b=\frac{\beta-1+\mu}{2}$ and $c=\frac{N}{2}$
that 
\begin{align*}
(1-z)\psi'(z)
&=(1-z)\Big((\beta+2) \psi_{\beta,\mu}'(z)
 + 2z\psi_{\beta,\mu}''(z)\Big)
\\
&=(\beta+2)(1-z)\psi_{\beta,\mu}'(z) + 2z(1-z)\psi_{\beta,\mu}''(z)
\\
&=2(a+1)(1-z)\psi_{\beta,\mu}'(z) 
- 2(c-(1+a+b)z)\psi_{\beta,\mu}'(z)+2ab\psi_{\beta,\mu}(z)
\\
&=2(a+1-c)\psi_{\beta,\mu}'(z) 
+2bz\psi_{\beta,\mu}'(z)+2ab\psi_{\beta,\mu}(z)
\\
&=2(a+1-c)\psi_{\beta,\mu}'(z) 
+b\psi(z)
\end{align*}
and therefore $(1-z)\psi'(z)-b\psi(z)=2(a+1-c)\psi_{\beta,\mu}'(z)$. 
The definition of $\psi$ yields 
\begin{align*}
z(1-z)\psi'(z)-bz\psi(z)
&=2(a+1-c)z\psi_{\beta,\mu}'(z)
\\
&=(a+1-c)\psi(z)-2(a+1-c)a\varphi(z)
\end{align*} 
Differentiating the above equality, we have
\begin{align*}
z(1-z)\psi''(z)+(1-(2+b)z)\psi'(z)-b\psi(z)
&=(a+1-c)\psi'(z)-2(a+1-c)a\psi_{\beta,\mu}'(z)
\\
&=(a+1-c)\psi'(z)-a\Big((1-z)\psi'(z)-b\psi(z)\Big).
\end{align*}
Hence we have $z(1-z)\psi''(z)+(c-(2+a+b)z)\psi'(z)+(a+1)b\psi(z)=0$. 
All solutions of this equation having a bounded derivative 
near $0$ can be written by $\psi(z)=h F(a+1,b,c;z)$ with $h\in \R$. 
Combining the initial value $\psi(0)=\beta$, we obtain {\bf (iv)}. 
\end{proof}

\begin{definition}\label{def:phi}
For $\mu>0$ and $\beta>\max\{0,1-\mu\}$, set 
\[
\Phi_{\beta}(x,t)
=
(1+t)\Psi_{\beta,\mu}(x,1+t). 
\]
\end{definition}

\begin{lemma}\label{lem:textfunctions}
For $\beta>0$, $\Phi_\beta$ satisfies the conjugate equation of \eqref{ndw}:
\begin{equation}\label{eq:dual}
\pa_t^2\Phi_{\beta}-\Delta \Phi_{\beta}-
\pa_t\left(\frac{\mu}{1+t}\Phi_{\beta}\right)=0, 
\quad \text{in}\ 
\mathcal{Q}_1=\{(x,t)\in\R^N\times (0,\infty)\;;\;|x|<1+t\}. 
\end{equation}
\end{lemma}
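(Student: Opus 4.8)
The plan is to reduce \eqref{eq:dual} to the linear damped equation already established in Lemma~\ref{lem:psi-prop}(i), exploiting two structural facts: the operator in \eqref{eq:dual} is the formal adjoint of the linear part of \eqref{ndw}, and multiplication by the weight $1+t$ intertwines the reflected damping $\frac{2-\mu}{1+t}$ with the conjugate (divergence) damping $-\pa_t(\frac{\mu}{1+t}\,\cdot\,)$. First I would set $v=\Psi_{\beta,\mu}$, the self-similar profile which by Lemma~\ref{lem:psi-prop}(i) satisfies
\[
\pa_t^2 v-\Delta v+\frac{2-\mu}{1+t}\pa_t v=0,\qquad (x,t)\in\mathcal{Q}_1,
\]
and then regard $\Phi_\beta=(1+t)v$, so that the whole task becomes propagating this identity through the multiplier $1+t$.

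Next I would expand the three terms of \eqref{eq:dual} by the product rule. From $\pa_t\Phi_\beta=v+(1+t)\pa_t v$ one obtains $\pa_t^2\Phi_\beta=2\pa_t v+(1+t)\pa_t^2 v$ and $\Delta\Phi_\beta=(1+t)\Delta v$. The decisive simplification occurs in the damping term: the weight cancels, $\frac{\mu}{1+t}\Phi_\beta=\mu v$, so that $\pa_t(\frac{\mu}{1+t}\Phi_\beta)=\mu\pa_t v$ carries no factor of $1+t$. Collecting terms, the second-order contributions assemble into $(1+t)(\pa_t^2 v-\Delta v)$ while the first-order contributions give $2\pa_t v-\mu\pa_t v=(2-\mu)\pa_t v$, whence
\[
\pa_t^2\Phi_\beta-\Delta\Phi_\beta-\pa_t\Big(\frac{\mu}{1+t}\Phi_\beta\Big)
=(1+t)\Big[\pa_t^2 v-\Delta v+\frac{2-\mu}{1+t}\pa_t v\Big]=0
\]
by Lemma~\ref{lem:psi-prop}(i), which finishes the proof.

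I do not anticipate a genuine obstacle: this is essentially a one-line algebraic identity once the weight $1+t$ is in place. The only delicate point is the bookkeeping of the first-order coefficient — the constant $2-\mu$ arising as the difference of the $2$ from the cross term in $\pa_t^2[(1+t)v]$ and the $\mu$ from the conjugate term must reproduce exactly the reflected damping $2-\mu$ for which the profile $\Psi_{\beta,\mu}$ was constructed in Lemma~\ref{lem:psi-prop}(i). This matching is precisely the compatibility between the choice of damping for the profile and the adjoint structure of \eqref{eq:dual}, and it is what forced the coefficient $\frac{2-\mu}{1+t}$ in the first place.
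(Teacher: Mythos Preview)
Your proposal is correct and essentially identical to the paper's own proof: both set $v=\Psi_{\beta,\mu}$, expand $\pa_t^2[(1+t)v]$, $\Delta[(1+t)v]$ and $\pa_t\bigl(\tfrac{\mu}{1+t}(1+t)v\bigr)=\mu\pa_t v$ by the product rule, collect terms into $(1+t)\bigl[\pa_t^2 v-\Delta v+\tfrac{2-\mu}{1+t}\pa_t v\bigr]$, and invoke Lemma~\ref{lem:psi-prop}(i). There is nothing to add.
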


\begin{proof}
By direct calculation we see that 
\begin{align*}
\pa_t^2\Phi_{\beta}-\Delta \Phi_{\beta}-
\pa_t\left(\frac{\mu}{1+t}\Phi_{\beta}\right)
&=
(1+t)
\pa_t^2\Psi_{\beta,\mu}
+
2\pa_t\Psi_{\beta,\mu}
-(1+t)\Delta \Psi_{\beta,\mu}-
\mu \pa_t\Psi_{\beta,\mu}
\\
&=(1+t)\left[
\pa_t^2\Psi
-\Delta \Psi+\frac{2-\mu}{1+t}\pa_t\Psi
\right].
\end{align*}
Applying Lemma \ref{lem:psi-prop} {\bf (i)}, we obtain \eqref{eq:dual}.
\end{proof}

\section{Proof of Blowup phenomena and estimates for lifespan}

\subsection{Preliminaries for showing blowup phenomena}
We first state a criterion for derivation of 
upper bound for lifespan. 

The following assertion 
is essentially proved in \cite[Section 3]{ZH14}.
For the detail see e.g., 
\cite{IkedaSobajima1}.
\begin{lemma}\label{lem:blowup}
Let $H\in C^2([\sigma_0,\infty)$ be nonnegative function. 

\noindent{\bf (i)}
Assume that there exists positive constants $c,C,C'$ such that
\begin{align*}
C[H(\sigma)]^{p}\leq H''(\sigma)+c\,\frac{H'(\sigma)}{\sigma}
\end{align*}
with $H(\sigma)\geq \ep^pC\sigma^2$ and $H'(\sigma)\geq \ep^p C\sigma$. 
Then there exists a positive constant $\ep_0$ such that 
if $\ep\in (0,\ep_0)$, then 
$H$ blows up before $\sigma=C''\ep^{-\frac{p-1}{2}}$ for some $C''>0$. 

\noindent{\bf (ii)}
Assume that there exist positive constants $c,C,C'$ such that
\begin{align*}
C\sigma^{1-p}[H(\sigma)]^{p}\leq H''(\sigma)+2 H'(\sigma)
\end{align*}
with $H(\sigma)\geq \ep^pC\sigma$ and $H'(\sigma)\geq \ep^p C$. 
Then 
there exists a positive constant $\ep_0$ such that 
if $\ep\in (0,\ep_0)$, then 
$H$ blows up before $\sigma=C''\ep^{-p(p-1)}$ for some $C''>0$. 
\end{lemma}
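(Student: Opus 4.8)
The plan is to read both assertions as Kato-type ODE blow-up lemmas and to prove them by John's slicing-and-iteration method. The common first move is to remove the first-order term with an integrating factor. In (i), since $\sigma^{c}\big(H''+cH'/\sigma\big)=(\sigma^{c}H')'$, the hypothesis reads $(\sigma^{c}H')'\ge C\sigma^{c}H^{p}\ge 0$; in (ii), since $e^{2\sigma}(H''+2H')=(e^{2\sigma}H')'$, it reads $(e^{2\sigma}H')'\ge Ce^{2\sigma}\sigma^{1-p}H^{p}\ge 0$. In either case the weighted derivative is nondecreasing, and the prescribed lower bounds give $H'(\sigma_{0})>0$; hence $H'>0$ and $H$ is strictly increasing on $[\sigma_{0},\infty)$. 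This monotonicity is what later lets me replace $H(s)$ by endpoint values inside integrals.

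For (i) I would iterate from the base bound $H(\sigma)\ge D_{0}\sigma^{\alpha_{0}}$ with $(D_{0},\alpha_{0})=(\ep^{p}C,2)$. Assuming $H\ge D_{j}\sigma^{\alpha_{j}}$ on $[2\sigma_{0},\infty)$, I substitute this into $(\sigma^{c}H')'\ge C\sigma^{c}H^{p}$ and integrate twice from $\sigma_{0}$; discarding the favourable boundary terms and using $\sigma\ge 2\sigma_{0}$ to absorb the subtracted powers of $\sigma_{0}$, I obtain $H\ge D_{j+1}\sigma^{\alpha_{j+1}}$ with
\[
\alpha_{j+1}=p\alpha_{j}+2,\qquad
D_{j+1}=\frac{C\,D_{j}^{\,p}}{4\,(c+p\alpha_{j}+1)(p\alpha_{j}+2)} .
\]
The exponent recursion solves as $\alpha_{j}=\tfrac{2p}{p-1}p^{j}-\tfrac{2}{p-1}$, so $\alpha_{j}\sim\tfrac{2p}{p-1}p^{j}$. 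Taking $\log$ of the coefficient recursion and dividing by $p^{j}$, the algebraic factors contribute a convergent series, whence $p^{-j}\log D_{j}\to p\log\ep+C_{1}$ with $C_{1}$ independent of $\ep$. Therefore $H(\sigma)\ge D_{j}\sigma^{\alpha_{j}}=\exp\!\Big(p^{j}\big(p\log\ep+C_{1}+\tfrac{2p}{p-1}\log\sigma+o(1)\big)\Big)$, and the right-hand side diverges as $j\to\infty$ precisely when $p\log\ep+C_{1}+\tfrac{2p}{p-1}\log\sigma>0$, i.e. $\sigma>C''\ep^{-(p-1)/2}$. Since any value $H(\sigma)$ is finite while $H$ exists, divergence of the lower bound forces $H$ to blow up before $C''\ep^{-(p-1)/2}$; choosing $\ep_{0}$ small guarantees this time exceeds $2\sigma_{0}$.

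For (ii) the same engine runs with base $(D_{0},\alpha_{0})=(\ep^{p}C,1)$, but the integrating factor is now exponential, so the inductive step requires a lower bound for $\int_{\sigma_{0}}^{\sigma}e^{2s}s^{m}\,ds$ rather than for an elementary power integral. This Watson-type estimate is the genuinely new ingredient: one localizes the integral near its right endpoint and balances the competing growth of $e^{2s}$ and $s^{m}$, and it is here that the decaying weight $\sigma^{1-p}$ together with the exponential produces, after the same $p^{-j}\log$ bookkeeping, a threshold of the form $\sigma>C''\ep^{-p(p-1)}$. I would emphasise that one cannot shortcut (ii) by reducing to a first-order inequality $H'\gtrsim\sigma^{1-p}H^{p}$: in the application $p=p_{0}(N+\mu)$, which can exceed $2$ (indeed $p_{0}(1+\mu)>3$ when $N=1$), the weight $\sigma^{1-p}$ is integrable, so a separation-of-variables argument fails and the full second-order structure must be retained.

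The main obstacle in both parts is the constant-tracking in the coefficient recursion: to land on exactly the exponents $-(p-1)/2$ and $-p(p-1)$ of $\ep$, I must confirm that $p^{-j}\log D_{j}$ really converges and that the only $\ep$-dependent contribution to its limit is the clean $p\log\ep$ inherited from $D_{0}$, the algebraic factors $(c+p\alpha_{j}+1)(p\alpha_{j}+2)$ (and their (ii)-analogues) summing to an $\ep$-free constant. For (ii) there is the additional delicacy of making the exponentially weighted integral estimate uniform across the iteration, where the effective power $m=1-p+p\alpha_{j}$ grows without bound; handling this regime carefully is what pins down the blow-up time. Everything else—positivity, monotonicity, and the repeated integration—is routine once the integrating factors are installed.
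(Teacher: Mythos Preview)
The paper does not give its own proof of this lemma: immediately before the statement it says the assertion ``is essentially proved in \cite[Section~3]{ZH14}'' and refers to \cite{IkedaSobajima1} for details. So there is no argument in the paper to compare yours against, and any self-contained correct proof is acceptable here.

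Your treatment of (i) is the standard Kato--John iteration and is correct. The recursion $\alpha_{j+1}=p\alpha_j+2$, the convergence of $p^{-j}\log D_j$ to $p\log\ep+C_1$ with $C_1$ independent of $\ep$, and the resulting threshold $\sigma>C''\ep^{-(p-1)/2}$ are all right. One small point you gloss over is the domain on which the $j$-th iterate holds: each double integration from $\sigma_0$ costs a factor when you absorb the lower endpoint, but since the relevant exponents $c+p\alpha_j+1$ and $p\alpha_j+2$ grow geometrically, the cumulative loss $\prod_j\bigl(1-2^{-(p\alpha_j+1)}\bigr)$ converges, so all iterates live on a fixed ray $[\tilde\sigma_0,\infty)$. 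This is routine once noticed.

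For (ii) you correctly flag the new obstacle, but two points deserve to be made explicit. First, the exponent recursion is \emph{not} the same as in (i): because the exponential integral satisfies $\int^{\sigma}e^{2s}s^{m}\,ds\approx\tfrac12 e^{2\sigma}\sigma^{m}$ and returns no extra power of $\sigma$, the iteration gives $\alpha_{j+1}=p\alpha_j+2-p$, hence $\alpha_j\sim p^{j}/(p-1)$; it is this coefficient $1/(p-1)$, not $2p/(p-1)$, that eventually produces the lifespan exponent $p(p-1)$. Second, the Watson-type lower bound $\int_{\sigma_0}^{\sigma}e^{2s}s^{m}\,ds\gtrsim e^{2\sigma}\sigma^{m}$ is uniform only for $\sigma\gtrsim m$, so the $j$-th iterate is valid merely on $\sigma\gtrsim m_j\sim p^{j+1}/(p-1)$. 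Unlike in (i), you therefore cannot let $j\to\infty$ at fixed $\sigma$; you must stop at $j^{*}\sim\log_p\sigma$, evaluate $\log\bigl(D_{j^{*}}\sigma^{\alpha_{j^{*}}}\bigr)\sim \tfrac{\sigma}{p}\bigl(\log\sigma+p(p-1)\log\ep+\mathrm{const}\bigr)$, and read off that this diverges exactly when $\sigma>C''\ep^{-p(p-1)}$. This stopping-time step is genuine, and your phrase ``after the same $p^{-j}\log$ bookkeeping'' undersells it; I would write it out.
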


We focus our eyes to the following functionals. 
\begin{definition}\label{def:G_beta}
For $\beta\in (\max\{0,1-\mu\},\frac{N+1-\mu}{2})$, define the following three functions
\begin{align*}
G_\beta(t)
&:=
\int_{\R^N}
  |u(x,t)|^{p}
  \Phi_{\beta}(x,t)
\,dx,\quad t\geq 0,
\\
H_\beta(t)
&:=
\int_0^t
  (t-s)(1+s)G_\beta(t)
\,ds, 
\quad t\geq 0,
\\
J_\beta(t)
&:=
\int_0^t
  (1+s)^{-3}H_\beta(t)
\,ds, 
\quad t\geq 0. 
\end{align*}
Note that we can see from Lemma \ref{lem:psi-prop} {\bf (ii)} that 
$G_\beta(t)\approx (1+t)^{1-\beta}\|u(t)\|_{L^p}^p$. 
\end{definition}

For the proof of blowup phenomena, 
we remark the following two lemmas. 

\begin{lemma}\label{lem:sufficient}
If $u_\ep$ is a solution of \eqref{ndw} in Proposition \ref{prop:exi-uni}
with parameter $\ep>0$, 
then $J_\beta$ does not blow up until $\lifespan (u_\ep)$. 
\end{lemma}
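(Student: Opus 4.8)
The plan is to reduce the non-blowup of $J_\beta$ on $[0,\lifespan(u_\ep))$ to a locally-in-time bound on $G_\beta$, and to obtain that bound from finite propagation speed together with the boundedness of the test function supplied by Lemma \ref{lem:psi-prop}. First I would fix $t_*<\lifespan(u_\ep)$, choose $T\in(t_*,\lifespan(u_\ep))$ with $u\in X_T$, and invoke finite propagation speed for \eqref{ndw}: since the damping $\frac{\mu}{1+t}\pa_t u$ is a lower-order term that leaves the light cone of the principal part $\pa_t^2-\Delta$ unchanged, and $\supp(f,g)\subset\overline{B(0,r_0)}$, the backward-cone energy estimate gives $\supp u(\cdot,s)\subset\overline{B(0,r_0+s)}$ for $s\in[0,T]$. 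Because $r_0<1$, this support lies strictly inside $\mathcal{Q}_1$, away from the boundary $|x|=1+s$, so $\Phi_\beta(\cdot,s)$ is smooth and bounded there.

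Next I would estimate $G_\beta$ on this support. For $\beta\in(\max\{0,1-\mu\},\frac{N+1-\mu}{2})$, Lemma \ref{lem:psi-prop} {\bf (ii)} gives $1\le\psi_{\beta,\mu}\le c_{\beta,\mu}$, so, as recorded after Definition \ref{def:G_beta}, $G_\beta(s)\le C(1+s)^{1-\beta}\|u(s)\|_{L^p}^p$. Since $u(s)\in H^1(\R^N)$ with $p$ in the range of Proposition \ref{prop:exi-uni}, the Sobolev embedding $H^1\hookrightarrow L^q$ (for $2\le q\le \frac{2N}{N-2}$ when $N\ge3$, and all finite $q$ when $N\le2$), combined with the finite measure of $\overline{B(0,r_0+s)}$ to handle the range $1\le p<2$ via H\"older, shows $\|u(s)\|_{L^p}<\infty$. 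Using $u\in C([0,T];H^1)$ I would then conclude $M:=\sup_{s\in[0,t_*]}G_\beta(s)<\infty$.

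Finally I would feed this into the nested integrals: $H_\beta(t)=\int_0^t(t-s)(1+s)G_\beta(s)\,ds\le M\int_0^t(t-s)(1+s)\,ds<\infty$ and $J_\beta(t)=\int_0^t(1+s)^{-3}H_\beta(s)\,ds<\infty$ for every $t\le t_*$; since $t_*<\lifespan(u_\ep)$ is arbitrary, this proves the claim. The step I expect to require the most care is finite propagation speed at the low regularity of the class $X_T$: the energy identity must be read in the weak sense $\pa_t^2u-\Delta u+b(t)\pa_t u=|u|^p$ in $H^{-1}$, and the damping term must be controlled (it carries the favourable sign when $\mu>0$, and is otherwise absorbed by Gronwall). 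A secondary point is the $L^p$ bound for $p<2$, which is precisely where the compactness of the support, guaranteed by $r_0<1$, is essential.
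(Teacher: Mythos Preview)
The paper does not actually give a proof of Lemma~\ref{lem:sufficient}; it is stated without argument (together with Lemma~\ref{lem:trick}) as a routine remark, presumably deferring the details to the companion work \cite{IkedaSobajima1}. Your proposal supplies exactly the expected verification: finite propagation speed confines $\supp u(\cdot,s)$ to $\overline{B(0,r_0+s)}$, the choice $\beta\in(\max\{0,1-\mu\},\frac{N+1-\mu}{2})$ makes $\psi_{\beta,\mu}$ globally bounded by Lemma~\ref{lem:psi-prop}~{\bf (ii)} (so the ``strictly inside $\mathcal{Q}_1$'' observation is in fact not even needed here), Sobolev embedding together with compact support handles $\|u(s)\|_{L^p}$ for all admissible $p$, and continuity in $H^1$ gives the uniform bound on $G_\beta$ over $[0,t_*]$. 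The subsequent estimates for $H_\beta$ and $J_\beta$ are then trivial. Your cautionary remarks about finite propagation at $X_T$ regularity and the case $p<2$ are apt but standard; the argument is correct and complete.
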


\begin{lemma}\label{lem:trick}For every $\beta>0$ and $t\geq 0$, 
\[
(1+t)^2J_\beta(t)=\frac{1}{2}\int_0^t(t-s)^2G_\beta(s)\,ds.
\]
\end{lemma}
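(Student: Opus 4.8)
The plan is to unfold the nested definitions and reduce everything to a single double integral that can be evaluated by Fubini's theorem and one elementary $s$-integration. Reading the integrands of $H_\beta$ and $J_\beta$ with the inner variable as the integration variable (so that $H_\beta(s)=\int_0^s(s-r)(1+r)G_\beta(r)\,dr$), I would first write
\[
J_\beta(t)=\int_0^t(1+s)^{-3}\int_0^s(s-r)(1+r)G_\beta(r)\,dr\,ds.
\]
Since $\Phi_\beta\geq 0$ by Lemma \ref{lem:psi-prop} {\bf (ii)} and $|u|^p\geq 0$, the integrand $G_\beta$ is nonnegative and continuous, so Fubini applies on the triangle $\{0\leq r\leq s\leq t\}$ and I may swap the order of integration to obtain
\[
J_\beta(t)=\int_0^t(1+r)G_\beta(r)\left(\int_r^t(s-r)(1+s)^{-3}\,ds\right)dr.
\]

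Next I would compute the inner $s$-integral. The key simplification is to write $s-r=(1+s)-(1+r)$, so that $(s-r)(1+s)^{-3}=(1+s)^{-2}-(1+r)(1+s)^{-3}$, and both terms integrate elementarily. This yields
\[
\int_r^t(s-r)(1+s)^{-3}\,ds=\tfrac12(1+r)^{-1}-(1+t)^{-1}+\tfrac12(1+r)(1+t)^{-2}.
\]
Multiplying by the weight $(1+r)$ and by the overall factor $(1+t)^2$ collapses this to a quadratic polynomial in $(1+r)$ and $(1+t)$:
\[
(1+t)^2(1+r)\int_r^t(s-r)(1+s)^{-3}\,ds=\tfrac12(1+t)^2-(1+t)(1+r)+\tfrac12(1+r)^2.
\]

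Finally I would recognize the right-hand side as $\tfrac12\big[(1+t)-(1+r)\big]^2=\tfrac12(t-r)^2$, which is precisely the algebraic identity that makes everything fit; substituting back into the expression for $(1+t)^2J_\beta(t)$ gives
\[
(1+t)^2J_\beta(t)=\int_0^t\tfrac12(t-r)^2G_\beta(r)\,dr,
\]
as claimed. There is no real obstacle here: the only points requiring a word of justification are the interchange of integration order (covered by nonnegativity and continuity of $G_\beta$) and the bookkeeping in the elementary integral, and the whole computation hinges on the splitting $s-r=(1+s)-(1+r)$ together with the clean factorization $\tfrac12(1+t)^2-(1+t)(1+r)+\tfrac12(1+r)^2=\tfrac12(t-r)^2$. (Alternatively, one could prove the identity by differentiating both sides three times in $t$, checking that both reduce to $G_\beta(t)$ and that all lower-order terms vanish at $t=0$; but the Fubini route is the most transparent.)
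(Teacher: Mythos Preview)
Your proof is correct. The paper states this lemma without proof, so there is nothing to compare against; your direct computation via Fubini and the elementary identity $\tfrac12(1+t)^2-(1+t)(1+r)+\tfrac12(1+r)^2=\tfrac12(t-r)^2$ is precisely the natural argument one would supply. One minor remark: since $G_\beta$ is continuous on $[0,t]$, the iterated integrals are absolutely convergent and Fubini applies regardless of sign, so the appeal to nonnegativity of $\Phi_\beta$ is not strictly needed.
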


The following lemma is a unified approach 
of subcritical and critical case for blowup phenomena. 

\begin{lemma}\label{base}
Let $u$ be a solution of \eqref{ndw}. Then for every $\beta>0$ and $t\geq0$, 
\begin{align}
\nonumber
&
\ep E_{\beta,0}+
\ep E_{\beta,1}t+
\int_0^t(t-s)
G_\beta(s)\,ds
\\
\label{G-equality}
&= 
\int_{\R^N}u(t)\Phi_\beta(t)\,dx
+
\int_{0}^t
(1+s)^{-\beta}
\int_{\R^N}u(s)
\widetilde{\psi}_\beta\left(\frac{|x|^2}{(1+s)^2}\right)
\,dx\,ds,
\end{align}
where 
\begin{align*}
E_{\beta,0}
&=
\int_{\R^N}
   f(x)\psi_{\beta,\mu}(|x|^2)
\,dx>0.
\\
E_{\beta,1}&=
   \int_{\R^N}g(x)\psi_{\beta,\mu}(|x|^2)
\,dx
+
\int_{\R^N}f(x)
\Big(
   \beta\psi_{\beta+2,\mu-2}(|x|^2)
   + (\mu-1) \psi_{\beta,\mu}(|x|^2)
   \Big)\,dx
>0
\end{align*}
and 
\[
\widetilde{\psi}_\beta(z)
=2\beta
\psi_{\beta+2,\mu-2}(z)+(\mu-2)\psi_{\beta,\mu}(z).
\]
\end{lemma}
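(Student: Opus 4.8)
The plan is to run the test-function (weak-formulation) argument against $\Phi_\beta$, but to organize it so that a single first-order ``Wronskian'' identity $K'(t)=G_\beta(t)$ emerges, after which \eqref{G-equality} follows by integrating twice in $t$. Throughout, the structural fact I will exploit is finite propagation speed together with $r_0<1$: since $\supp(f,g)\subset\overline{B(0,r_0)}$, the solution satisfies $\supp u(\cdot,t)\subset\overline{B(0,r_0+t)}$, and because $r_0+t<1+t$ this support stays strictly inside the cone $\mathcal{Q}_1$ where, by Lemma \ref{lem:textfunctions} and Lemma \ref{lem:psi-prop}(ii), $\Phi_\beta$ is smooth and bounded. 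Hence every spatial integration by parts below produces no boundary contribution and all pairings converge.

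First I would introduce $F(t)=\int_{\R^N}u(t)\Phi_\beta(t)\,dx$ together with the auxiliary quantity
\[
K(t)=\int_{\R^N}\pa_t u\,\Phi_\beta\,dx-\int_{\R^N}u\,\pa_t\Phi_\beta\,dx+\frac{\mu}{1+t}\int_{\R^N}u\,\Phi_\beta\,dx .
\]
Differentiating $K$ in $t$, substituting the equation $\pa_t^2u=\Delta u-\frac{\mu}{1+t}\pa_t u+|u|^p$, and moving the Laplacian onto the test function via $\int(\Delta u)\Phi_\beta=\int u\,\Delta\Phi_\beta$, the two terms carrying $\pa_t u$ cancel and one is left with $K'(t)=\int_{\R^N}u\big[\Delta\Phi_\beta-\pa_t^2\Phi_\beta-\tfrac{\mu}{(1+t)^2}\Phi_\beta+\tfrac{\mu}{1+t}\pa_t\Phi_\beta\big]\,dx+G_\beta(t)$. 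The bracket is exactly the negative of the conjugate operator applied to $\Phi_\beta$, so it vanishes by Lemma \ref{lem:textfunctions}, leaving the clean identity $K'(t)=G_\beta(t)$. This cancellation is the heart of the proof and the precise reason the test function was constructed to solve \eqref{eq:dual}.

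Next I would evaluate the boundary and correction data. At $t=0$ one has $\Phi_\beta(x,0)=\psi_{\beta,\mu}(|x|^2)$ and, using $\pa_t\Phi_\beta=\Psi_{\beta,\mu}-\beta(1+t)^2\Psi_{\beta+2,\mu-2}$ from $\Phi_\beta=(1+t)\Psi_{\beta,\mu}$ and Lemma \ref{lem:psi-prop}(iv), $\pa_t\Phi_\beta(x,0)=\psi_{\beta,\mu}(|x|^2)-\beta\psi_{\beta+2,\mu-2}(|x|^2)$; feeding in $u(0)=\ep f$, $\pa_t u(0)=\ep g$ gives exactly $F(0)=\ep E_{\beta,0}$ and $K(0)=\ep E_{\beta,1}$. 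For the correction term I rewrite $K(\tau)=F'(\tau)-2\int u\,\pa_t\Phi_\beta\,dx+\frac{\mu}{1+\tau}\int u\,\Phi_\beta\,dx$ and observe, again from Lemma \ref{lem:psi-prop}(iv), that $-2\pa_t\Phi_\beta+\frac{\mu}{1+\tau}\Phi_\beta=(1+\tau)^{-\beta}\widetilde\psi_\beta\!\big(\tfrac{|x|^2}{(1+\tau)^2}\big)$, which is exactly the kernel in \eqref{G-equality}. Integrating $K'=G_\beta$ once and then again, using Fubini in the form $\int_0^t\!\int_0^\tau G_\beta(s)\,ds\,d\tau=\int_0^t(t-s)G_\beta(s)\,ds$ together with $\int_0^t F'(\tau)\,d\tau=F(t)-F(0)$, and substituting the boundary and correction data, yields \eqref{G-equality}. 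The stated positivity $E_{\beta,0},E_{\beta,1}>0$ then follows from $f,g\ge0$, $f+g\not\equiv0$, the bound $\psi_{\beta,\mu}\ge1$ of Lemma \ref{lem:psi-prop}(ii), the monotonicity $\psi_{\beta+2,\mu-2}\ge\psi_{\beta,\mu}$ of (iv), and the inequality $\beta+\mu-1>0$ guaranteed by $\beta>\max\{0,1-\mu\}$.

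The main obstacle is not algebraic but one of rigor: $u$ is only known to lie in $C([0,T];H^1)\cap C^1([0,T];L^2)\cap C^2([0,T];H^{-1})$ with the equation holding in $H^{-1}$, so the differentiations of $F$ and $K$ and the spatial integration by parts must be read as $H^{-1}$--$H^1$ dualities rather than classical integrals. Making this precise is exactly where finite propagation speed with $r_0<1$ is indispensable: it confines $\supp u(\cdot,t)$ to a compact set on which $\Phi_\beta$ and all derivatives entering $K$ are smooth and bounded, so one may pair $u$ against $\Phi_\beta$ (approximating by smooth compactly supported functions if desired) with no boundary terms and with all time-differentiations justified by the $C^2([0,T];H^{-1})$ regularity. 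I would therefore state the support localization first and carry out every manipulation in the weak sense.
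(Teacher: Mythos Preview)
Your proof is correct and follows essentially the same route as the paper: you introduce exactly the quantity $K(t)=\int(\pa_t u\,\Phi_\beta-u\,\pa_t\Phi_\beta)\,dx+\frac{\mu}{1+t}\int u\,\Phi_\beta\,dx$ that the paper obtains, derive $K'(t)=G_\beta(t)$ via the conjugate equation, identify the correction kernel $\widetilde\psi_\beta$ from $-2\pa_t\Phi_\beta+\frac{\mu}{1+t}\Phi_\beta$, and integrate twice with the same initial evaluations and the same positivity argument $\beta\psi_{\beta+2,\mu-2}+(\mu-1)\psi_{\beta,\mu}\ge(\beta+\mu-1)\psi_{\beta,\mu}>0$. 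Your additional paragraph on interpreting the pairings in the $H^{-1}$--$H^1$ duality, justified by finite propagation speed and $r_0<1$, is a welcome point of rigor that the paper leaves implicit.
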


\begin{proof}
By the equation in \eqref{ndw} 
we see from integration by parts that
\begin{align*}
G_\beta(t)
&=
\int_{\R^N}
\left(
  \pa_t^2u(t)-\Delta u(t)+\frac{\mu}{1+t}\pa_tu(t)
\right)\Phi_\beta(t)
\,dx
\\
&=
\int_{\R^N}
\left(
  \pa_t^2u(t)+\frac{\mu}{1+t}\pa_tu(t)
\right)\Phi_\beta(t)
\,dx
-
\int_{\R^N}u(t)\Delta \Phi_\beta(t)\,dx.
\end{align*}
Using Lemma \ref{lem:textfunctions}, we have
\begin{align*}
G_\beta(t)&=
\int_{\R^N}
\left(
  \pa_t^2u(t)+\frac{\mu}{1+t}\pa_tu(t)
\right)\Phi_\beta(t)
\,dx
-
\int_{\R^N}u(t)
\left(\pa_t^2\Phi_\beta(t)-\pa_t\left(\frac{\mu}{1+t}\Phi_\beta(t)\right)\right)\,dx
\\
&=
\frac{d}{dt}
\left[
\int_{\R^N}\left(\pa_t u(t)\Phi_\beta(t)-u(t)\pa_t\Phi_\beta(t)\right)\,dx
+
\frac{\mu}{1+t}\int_{\R^N}u(t)\Phi_\beta(t)\,dx\right].
\end{align*}
Observing that 
\begin{align*}
&\int_{\R^N}\left(\pa_t u(0)\Phi_\beta(0)-u(0)\pa_t\Phi_\beta(0)\right)\,dx
+
\mu\int_{\R^N}u(0)\Phi_\beta(0)\,dx
\\
&=
\ep \int_{\R^N}\left(g(x)\Psi_{\beta,\mu}(x,0)
-f(x)(\Psi_{\beta,\mu}(x,0)-\beta\Psi_{\beta+2,\mu-2}(x,0))\right)\,dx
+
\ep \mu\int_{\R^N}f(x)\Psi_{\beta,\mu}(x,0)\,dx
\\
&=
\ep 
\left(
   \int_{\R^N}g(x)\psi_{\beta,\mu}(|x|^2)
\,dx
+
\int_{\R^N}f(x)
\Big(
   \beta\psi_{\beta+2,\mu-2}(|x|^2)
   + (\mu-1) \psi_{\beta,\mu}(|x|^2)
   \Big)\,dx
\right).
\end{align*}
It follows from $\beta-1+\mu>0$ that 
\begin{align*}
\beta\psi_{\beta+2,\mu-2}(z)+(\mu-1) \psi_{\beta,\mu}(z)
&\geq 
(\beta-1+\mu)\psi_{\beta,\mu}(z)
\end{align*}
and hence 
we have $E_{\beta,1}>0$. 
Integrating it over $[0,t]$, we have
\begin{align*}
&\ep E_{\beta,1}+
\int_0^t
G_\beta(s)\,ds
\\
&= 
\int_{\R^N}\left(\pa_t u(t)\Phi_\beta(t)-u(t)\pa_t\Phi_\beta(t)\right)\,dx
+
\frac{\mu}{1+t}\int_{\R^N}u(t)\Phi_\beta(t)\,dx
\\
&= 
\frac{d}{dt}\left[\int_{\R^N}u(t)\Phi_\beta(t)\,dx\right]
+
(1+t)^{-\beta}
\int_{\R^N}u(t)
\left(
2\beta
\psi_{\beta+2,\mu-2}\left(\frac{|x|^2}{(1+t)^2}\right)
+(\mu-2)\psi_{\beta,\mu}\left(\frac{|x|^2}{(1+t)^2}\right)
\right)
\,dx.
\end{align*}
Integrating it again, we obtain \eqref{G-equality}.
\end{proof}

\begin{lemma}\label{base2}
Assume $p_F(N) <p\leq \frac{N}{N-2}$ and 
\begin{align*}
\begin{cases}
0< \mu <\frac{4}{3}
&\text{if }N=1,
\\[10pt]
0\leq  \mu <\mu_* 
&\text{if }N\geq 2. 
\end{cases}
\end{align*}
{\bf (i)}\ Let $q>p$ satisfy 
\[
\beta
=
\frac{N+1-\mu}{2}-\frac{1}{q}
>\max\{0,1-\mu\}.
\]
Then 
\begin{align*}
&\ep E_{\beta,0}+\ep E_{\beta,1} t+
\int_0^t
(t-s)G_\beta(s)\,ds
\leq
C_1
\left[
\|u(t)\|_{L^p}
(1+t)^{\frac{N}{p'}+1-\beta}
+
\int_0^t
\|u(s)\|_{L^p}
(1+s)^{\frac{N}{p'}+1-\frac{N+1-\mu}{2}-\frac{1}{p'}}
\,ds
\right].
\end{align*}
{\bf (ii)}\ If $N\geq 2$ or 
$N=1$ with $p>2\mu^{-1}$, then setting 
$
\beta_0=\frac{N+1-\mu}{2}-\frac{1}{p}>\max\{0,1-\mu\},
$
one has
\begin{align*}
&
\int_0^t
(t-s)G_\beta(s)\,ds
\leq
C_1
\left[
\|u(t)\|_{L^p}
(1+t)^{\frac{N}{p'}+1-\beta}
+
\int_0^t
\|u(s)\|_{L^p}
(1+s)^{\frac{N}{p'}-\beta_0}(\log (1+s))^{\frac{1}{p'}}
\,ds
\right].
\end{align*}

\end{lemma}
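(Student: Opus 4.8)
The plan is to bound the right-hand side of the identity in Lemma~\ref{base} by H\"older's inequality, using finite propagation speed together with the pointwise estimates for the hypergeometric profiles collected in Lemma~\ref{lem:psi-prop}. The crucial geometric input is that, since $\supp(f,g)\subset\overline{B(0,r_0)}$ with $r_0<1$, finite propagation speed for the wave operator (the damping does not enlarge the support) gives $\supp u(\cdot,s)\subset\{|x|\le r_0+s\}$, so that on this set
\[
1-\frac{|x|}{1+s}\ge\frac{1-r_0}{1+s}>0 .
\]
Since $|\widetilde{\psi}_\beta(z)|\le 2\beta\,\psi_{\beta+2,\mu-2}(z)+|\mu-2|\,\psi_{\beta,\mu}(z)\le C\,\psi_{\beta+2,\mu-2}(z)$ by Lemma~\ref{lem:psi-prop}~{\bf (iv)}, and $\Phi_\beta(x,t)\le c(1+t)^{1-\beta}$ on the support (because $\max\{0,1-\mu\}<\beta<\frac{N+1-\mu}{2}$ makes $\psi_{\beta,\mu}$ bounded by Lemma~\ref{lem:psi-prop}~{\bf (ii)}), everything reduces to controlling the two weighted $L^{p'}$-norms of $\Phi_\beta(t)$ and of $\psi_{\beta+2,\mu-2}(|\cdot|^2/(1+s)^2)$ over the support.

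For the boundary term I would simply write $|\int_{\R^N}u(t)\Phi_\beta(t)\,dx|\le\|u(t)\|_{L^p}\|\Phi_\beta(t)\|_{L^{p'}(\{|x|\le r_0+t\})}$ and use $\Phi_\beta\le c(1+t)^{1-\beta}$ together with $|\{|x|\le r_0+t\}|^{1/p'}\le c(1+t)^{N/p'}$; this produces the first term $\|u(t)\|_{L^p}(1+t)^{N/p'+1-\beta}$ in both {\bf (i)} and {\bf (ii)}.

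The heart of the matter is the integral term. In {\bf (i)}, since $\beta=\frac{N+1-\mu}{2}-\frac1q$ one has $\beta+2>\frac{N+3-\mu}{2}$, and the exponent $\frac{N+3-\mu}{2}-(\beta+2)$ equals $-\frac1{q'}$, so Lemma~\ref{lem:psi-prop}~{\bf (iii)} yields $\psi_{\beta+2,\mu-2}(z)\le c(1-\sqrt z)^{-1/q'}$. Passing to polar coordinates and substituting $\rho=|x|/(1+s)$ reduces the $p'$-th power of the norm to
\[
(1+s)^{N}\int_0^{1-\delta_s}(1-\rho)^{-p'/q'}\rho^{\,N-1}\,d\rho,\qquad \delta_s=\frac{1-r_0}{1+s}.
\]
Here $q>p$ forces $p'/q'>1$, so the integral is of order $\delta_s^{\,1-p'/q'}\approx(1+s)^{p'/q'-1}$; hence $\|\psi_{\beta+2,\mu-2}(|\cdot|^2/(1+s)^2)\|_{L^{p'}}\approx(1+s)^{N/p'+1/q'-1/p'}$. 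Multiplying by the prefactor $(1+s)^{-\beta}$ and using $\frac1q+\frac1{q'}=1$, the exponent collapses to exactly $\frac{N}{p'}+1-\frac{N+1-\mu}{2}-\frac1{p'}$, the claimed rate. I expect this sharp asymptotic to be the main obstacle: a crude supremum bound for $(1-\sqrt z)^{-1/q'}$ on the support loses the gain $-\frac1{p'}$ and yields a strictly larger exponent, so the singularity at $\rho\to1$ must be integrated precisely rather than estimated by its worst-case value.

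Part {\bf (ii)} is the borderline choice $q=p$, i.e.\ $\beta=\beta_0=\frac{N+1-\mu}{2}-\frac1p$, for which $p'/q'=1$. Then $\int_0^{1-\delta_s}(1-\rho)^{-1}\rho^{\,N-1}\,d\rho\approx\log(1/\delta_s)\approx\log(1+s)$ replaces the power by a logarithm, giving $\|\psi_{\beta_0+2,\mu-2}(|\cdot|^2/(1+s)^2)\|_{L^{p'}}\approx(1+s)^{N/p'}(\log(1+s))^{1/p'}$ and hence the stated rate $(1+s)^{N/p'-\beta_0}(\log(1+s))^{1/p'}$. The hypotheses $N\ge2$, or $N=1$ with $p>2\mu^{-1}$, are exactly those ensuring $\beta_0>\max\{0,1-\mu\}$, so that Lemma~\ref{lem:psi-prop} applies; finally, discarding the nonnegative terms $\ep E_{\beta_0,0}+\ep E_{\beta_0,1}t$ from the left-hand side of Lemma~\ref{base} turns the identity into the asserted inequality.
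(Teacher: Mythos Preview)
Your proposal is correct and follows essentially the same approach as the paper: starting from the identity of Lemma~\ref{base}, using finite propagation speed to restrict to $\{|x|\le r_0+s\}$, bounding the boundary term via Lemma~\ref{lem:psi-prop}~{\bf (ii)} and H\"older, and handling the integral term by controlling $\widetilde\psi_\beta\le C\psi_{\beta+2,\mu-2}$, applying Lemma~\ref{lem:psi-prop}~{\bf (iii)}, and integrating the singularity $(1-\rho)^{-p'/q'}$ in polar coordinates (with the logarithmic replacement in the borderline case $q=p$). Your explicit remark that a crude supremum bound on $(1-\sqrt z)^{-1/q'}$ would lose the $-1/p'$ gain is a useful clarification not spelled out in the paper, but the underlying computations are identical.
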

\begin{proof}
By Lemma \ref{base} with finite propagation property, we have 
\begin{align*}
&
\ep E_{\beta,0}+\ep E_{\beta,2} t+
\int_0^t
(t-s)G_\beta(s)\,ds
=I_{\beta,1}(t)+\beta I_{\beta,2}(t).
\end{align*}
where 
\begin{align*}
I_{\beta,1}(t)
&=
\int_{B(0,r_0+t)}u(x,t)\Phi_\beta(x,t)\,dx,
\\
I_{\beta,2}(t)
&=
\int_{0}^t(1+s)^{-\beta}\left(\int_{B(0,r_0+t)}u(x,s)
\widetilde{\psi}\left(\frac{|x|^2}{(1+s)^2}\right)\,dx\right)\,ds.
\end{align*}
Using Lemma \ref{lem:psi-prop} {\bf (ii)}, we have 
\begin{align*}
I_{\beta,1}(t)
&\leq 
\left(\int_{B(0,r_0+t)}|u(x,t)|^p\,dx\right)^\frac{1}{p}
\left(\int_{B(0,r_0+t)}\Phi_\beta(x,t)^{p'}\,dx\right)^\frac{1}{p'}
\\
&\leq 
c_{\beta,\mu}
N^{-\frac{1}{p'}}|S^{N-1}|^{\frac{1}{p'}}\|u(t)\|_{L^p}
(1+t)^{\frac{N}{p'}+1-\beta}.
\end{align*}
Noting that $\widetilde{\psi}(z)\leq (\beta+\mu)\psi_{\beta+2,\mu-2}(z)$, 
we see from Lemma \ref{lem:psi-prop} {\bf (iii)} that 
\begin{align*}
I_{\beta,2}'(t)
&\leq 
(1+t)^{-\beta}
\left(\int_{B(0,r_0+t)}|u(x,t)|^p\,dx\right)^\frac{1}{p}
\left(\int_{B(0,r_0+t)}
\psi_{\beta+2,\mu-2}\left(\frac{|x|^2}{(1+t)^2}\right)^{p'}\,dx
\right)^\frac{1}{p'}
\\
&\leq 
(c'_{\beta+2,\mu-2})^{-1}
\|u(t)\|_{L^p}
(1+t)^{-\beta}
\left(
   \int_{B(0,r_0+t)}
   \left(1-\frac{|x|}{1+t}\right)^{(\frac{N+1-\mu}{2}-\beta-1)p'}\,dx\right)^\frac{1}{p'}
\\
&= 
(c'_{\beta+2,\mu-2})^{-1}
|S^{N-1}|^{\frac{1}{p'}}
\|u(t)\|_{L^p}
(1+t)^{\frac{N}{p'}-\beta}
\left(
   \int_0^{\frac{r_0+t}{1+t}}
   (1-\rho)^{-\frac{p'}{q'}}\rho^{N-1}\,d\rho
   \right)^\frac{1}{p'}.
\end{align*}
If $q>p$, then we have 
\begin{align*}
&\ep E_{\beta,0}+\ep E_{\beta,2} t+
\int_0^t
(t-s)G_\beta(s)\,ds
\leq
C_1
\left[
\|u(t)\|_{L^p}
(1+t)^{\frac{N}{p'}+1-\beta}
+
\int_0^t
\|u(s)\|_{L^p}
(1+s)^{\frac{N}{p'}-\beta+\frac{1}{q'}-\frac{1}{p'}}
\,ds
\right].
\end{align*}
By the definition of $\beta$ we have the first desired inequality. 
The second is verified by noticing $q'/p'=1$ in the previous proof. 
\end{proof}

\subsection{Proof of Theorem \ref{main} for subcritical case 
$p_F\leq p<p_0(N+V_0)$}

\begin{lemma}\label{SN}
Assume $p_F(N) \leq p < p_0(N+\mu)$. 
Set 
\[
S_N=\left(0,\frac{1}{p}\right)
\cap 
\left(0,\frac{N-|1-\mu|}{2}\right)
\cap 
\left(\frac{(N-1+\mu)p-(N+1+\mu)}{2}, \frac{(N+1+\mu)p-(N+3+\mu)}{2}\right).
 \]
If $N=1$ and $0<\mu<\frac{4}{3}$, then $S_1\neq \emptyset$.
If $N\geq 2$ and $0\leq \mu<\frac{N^2+N+2}{N+2}$, then 
$S_N\neq \emptyset$. Moreover, one has
\[
\sup S_N=
\begin{cases}
\frac{1}{p}
&
\text{if } N=1\text{ and }\max\{3,\frac{2}{\mu}\}\leq p\leq p_0(1+\mu),
\\[5pt]
\frac{\mu}{2}
&
\text{if } N=1, 0<\mu<\frac{2}{3}\text{ and }3\leq p<\frac{2}{\mu},
\\[5pt]
\frac{1}{p}
&
\text{if } N\geq 2 \text{ and }p_0(N+2+\mu) \leq p<p_0(N+\mu),
\\[5pt]
\frac{(N+1+\mu)p-(N+3+\mu)}{2}
&
\text{if } N\geq 2 \text{ and }p_F(N)\leq p<p_0(N+2+\mu).
\end{cases}
\]
\end{lemma}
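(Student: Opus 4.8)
The plan is to recognize Lemma \ref{SN} as an elementary statement about an intersection of three open intervals and to reduce everything to comparisons of their endpoints. Writing
\[
L=\tfrac{(N-1+\mu)p-(N+1+\mu)}{2},\qquad R=\tfrac{(N+1+\mu)p-(N+3+\mu)}{2},\qquad b_2=\tfrac{N-|1-\mu|}{2},
\]
and noting that two of the three intervals have left endpoint $0$, the set collapses to the single interval $S_N=(\max\{0,L\},\,\min\{\tfrac1p,b_2,R\})$. Thus the entire lemma amounts to (i) deciding which of $\tfrac1p,b_2,R$ realizes the minimum in each parameter regime, and (ii) checking that $\max\{0,L\}$ lies strictly below that minimum. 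I record at the outset the two trivial facts $R-L=p-1>0$ and $R>0$, the latter from $p\ge p_F(N)=1+\tfrac2N>1+\tfrac{2}{N+1+\mu}$.

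The algebraic heart is to convert endpoint comparisons into sign statements about the polynomial $\gamma$. Multiplying by $2p>0$ gives $2p(R-\tfrac1p)=(N+1+\mu)p^2-(N+3+\mu)p-2=-\gamma(N+2+\mu;p)$ and likewise $2p(L-\tfrac1p)=-\gamma(N+\mu;p)$. Since $\gamma(n;\cdot)$ is a downward-opening parabola, positive strictly between its two roots, with a negative root and the positive root $p_0(n)$, and since $p>1$ exceeds the negative root, the sign of $\gamma(n;p)$ is positive exactly for $p<p_0(n)$. Hence $L<\tfrac1p$ is \emph{equivalent to the standing hypothesis} $p<p_0(N+\mu)$, while $R<\tfrac1p$ is equivalent to $p<p_0(N+2+\mu)$, with equality at $p=p_0(N+2+\mu)$. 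This single computation produces the threshold $p_0(N+2+\mu)$ that governs the case split.

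Next I identify the minimum. For $N\ge2$ the crux is the one inequality $b_2\ge\tfrac1p$ for all $p\ge p_F(N)$, proved by splitting at $\mu=1$: for $\mu\le1$, $b_2=\tfrac{N-1+\mu}{2}$ and $b_2\ge\tfrac1p\iff p\ge\tfrac{2}{N-1+\mu}$, which holds because $p_F(N)\ge\tfrac{2}{N-1+\mu}$ (equivalent to $(N-2)(N+1)\ge0$); for $1<\mu<\mu_*$, $b_2=\tfrac{N+1-\mu}{2}$ and here the bound $\mu<\mu_*$ enters decisively through $N+1-\mu>\tfrac{2N}{N+2}$, giving $\tfrac{2}{N+1-\mu}<p_F(N)\le p$ and hence $b_2>\tfrac1p$. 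Granting $b_2\ge\tfrac1p$, the dichotomy from the previous step finishes it: if $p\ge p_0(N+2+\mu)$ then $R\ge\tfrac1p$ and $\min\{\tfrac1p,b_2,R\}=\tfrac1p$; if $p<p_0(N+2+\mu)$ then $R<\tfrac1p\le b_2$ and the minimum is $R$, matching the two claimed values. For $N=1$ the constraint $p\ge p_F(1)=3>p_0(3+\mu)$ forces $R>\tfrac1p$, so $R$ drops out and only $\tfrac1p$ versus $b_2$ survives; with $b_2=\mu/2$ (for $\mu\le1$) one has $\mu/2\le\tfrac1p\iff p\le\tfrac2\mu$, producing the threshold $\tfrac2\mu$ and the values $\tfrac1p$ and $\tfrac\mu2$, while for $\mu>1$ one checks $b_2=1-\mu/2>\tfrac13\ge\tfrac1p$ so the supremum is $\tfrac1p$.

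Finally, nonemptiness is read off once $\omega:=\min\{\tfrac1p,b_2,R\}$ is known: I verify $\max\{0,L\}<\omega$ case by case. When $\omega=\tfrac1p$ this is exactly $L<\tfrac1p$, i.e. the hypothesis; when $\omega=R$ it is the trivial $L=R-(p-1)<R$ together with $R>0$; and when $\omega=\mu/2$ (the only case where $b_2$ binds, at $N=1$) it follows from $L<\mu/2\iff p<2+\tfrac2\mu$, which holds since $p\le\tfrac2\mu$ there. I expect the main obstacle to be precisely the bookkeeping in the minimum computation — the change of form of $b_2$ at $\mu=1$ and the need to keep the comparisons valid up to the boundaries $\mu\to\mu_*$ and $p\to p_0(N+\mu)$; the genuinely delicate quantitative input is the inequality $N+1-\mu>\tfrac{2N}{N+2}$, equivalent to $\mu<\mu_*$, with everything else reducing to linear or quadratic comparisons in $p$.
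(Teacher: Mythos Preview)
Your proof is correct and follows essentially the same route as the paper: both reduce $S_N$ to a single open interval $(\max\{0,L\},\min\{1/p,b_2,R\})$ and settle the lemma by comparing endpoints, with the identity $2p(L-1/p)=-\gamma(N+\mu;p)$ (resp.\ with $N+2+\mu$) providing the link to the Strauss exponents. The only cosmetic difference is that for $N\ge2$ the paper dispatches $b_2>1/p$ in one stroke via $|1-\mu|<N^2/(N+2)$ instead of splitting at $\mu=1$, and it handles $R>1/p$ for $N=1$ by the direct bound $R\ge 1+\mu$ rather than via $p>p_0(3+\mu)$.
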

\begin{proof}
First we consider the case $N=1$. If $p\geq \frac{2}{1-|1-\mu|}$, then 
$S_1=(0,\frac{1}{p})\cap \left(\frac{\mu p-(2+\mu)}{2}, \frac{(2+\mu)p-(4+\mu)}{2}\right)$. 
Since $\frac{\mu p -(2+\mu)}{2}< \frac{1}{p}$ 
is equivalent to $p< p_0(1+\mu)$ and 
\[
\frac{(2+\mu)p-(4+\mu)}{2}\geq \frac{(2+\mu)p_F(1)-(4+\mu)}{2}=1+\mu>\frac{1}{p},
\]
we have 
$S_1=(\max\{0,\frac{\mu p-(2+\mu)}{2}\},\frac{1}{p})\neq \emptyset$. 
If $p<\frac{2}{1-|1-\mu|}$, then $0<\mu<\frac{2}{3}$ 
and hence
$p<\frac{2}{\mu}$ and 
$S_1=(0,\frac{\mu}{2})\cap \left(\frac{\mu p-(2+\mu)}{2}, 
\frac{(2+\mu)p-(4+\mu)}{2}\right)$. 
Therefore noting that 
\[
\frac{\mu p -(2+\mu)}{2}
<
\frac{-\mu}{2}
<0<\frac{\mu}{2}<\frac{(2+\mu)p-(4+\mu)}{2},
\]
we have $S_1=(0,\frac{\mu}{2})\neq \emptyset$. 

For $N\geq 2$, noting that $|1-\mu|<\frac{N^2}{N+2}$, we have 
$\frac{N-|1-\mu|}{2}>\frac{1}{p_F}\geq \frac{1}{p}$ and then 
$S_N=(0,\frac{1}{p})\cap \left(\frac{(N-1+\mu)p-(N+1+\mu)}{2}, \frac{(N+1+\mu)p-(N+3+\mu)}{2}\right)$. 
We see from $p_F(N)\leq p<p_0(N+\mu)$ that 
$\frac{(N-1+\mu) p -(N+1+\mu)}{2}<\frac{1}{p}$ and 
\[
\frac{(N+1+\mu) p -(N+3+\mu)}{2}
\geq \frac{1+\mu}{N}>0.
\]
This implies that $S_N\neq \emptyset$. The equality for $\sup S_N$ 
can be verified by the above calculations. 
\end{proof}

\begin{proof}[Proof of Theorem \ref{main} for $p_F(N)\leq p<p_0(N+\mu)$]
In this case, fix $q$ satisfying $\frac{1}{q}\in S_N$ in Lemma \ref{SN}. 
This condition is equivalent to 
\[
q>p,\quad
\beta=\frac{N+1-\mu}{2}-\frac{1}{q}>\max\{0,1-\mu\}
\quad
\text{and}\quad
\lambda=\frac{\gamma(N+\mu;p)}{2p}-\frac{1}{p}+\frac{1}{q}\in (0,p-1).
\]
Then we see by Lemma \ref{base2} {\bf (i)} that 
\begin{align}\label{eq:4.2.1}
&
\ep E_{\beta,0}+\ep E_{\beta,1} t+
\int_0^t
(t-s)G_\beta(s)\,ds
\leq
C_1'
\left[
G_\beta(t)^\frac{1}{p}
(1+t)^{\frac{N+1-\beta}{p'}}
+
\int_0^t
G_\beta(s)^\frac{1}{p}
(1+s)^{\frac{N+1-\beta}{p'}-\frac{1}{q}-\frac{1}{p'}}
\,ds
\right].
\end{align}
Observe that 
\begin{align*}
\frac{N+1-\beta}{p'}-\frac{1}{q}-\frac{1}{p}
=
\frac{1}{p}\left(
p-1-\lambda
\right)\in \left(0,\frac{1}{p'}\right).
\end{align*}
Integrating \eqref{eq:4.2.1} over $[0,t]$, we deduce
\begin{align*}
&
\ep E_{\beta,0}t+\ep \frac{E_{\beta,1}}{2} t^2+
\frac{1}{2}\int_0^t
(t-s)^2G_\beta(s)\,ds
\\
&\leq
C_1'
\left[
\int_0^t G_\beta(s)^\frac{1}{p}
  (1+s)^{\frac{N+1-\beta}{p'}}
\,ds
+
\int_0^t
(t-s)G_\beta(s)^\frac{1}{p}
(1+s)^{\frac{N+1-\beta}{p'}-\frac{1}{q}-\frac{1}{p'}}
\,ds
\right]
\\
&\leq
C_1'
\left(
\int_0^t(1+s)G_\beta(s)\,ds
\right)^{\frac{1}{p}}
\left[
\left(
\int_0^t(1+s)^{(\frac{N+1-\beta}{p'}-\frac{1}{p})p'}\,ds
\right)^{\frac{1}{p'}}
+
\left(
\int_0^t(t-s)^{p'}(1+s)^{(\frac{N+1-\beta}{p'}-\frac{1}{q}-\frac{1}{p})p'-1}\,ds
\right)^{\frac{1}{p'}}
\right]
\\
&\leq
C_2
\left(
\int_0^t(1+s)G_\beta(s)\,ds
\right)^{\frac{1}{p}}
\left[
(1+t)^{\frac{N+1-\beta}{p'}-\frac{1}{p}+\frac{1}{p'}}
+
(1+t)^{1+\frac{N+1-\beta}{p'}-\frac{1}{q}-\frac{1}{p}}
\right]
\\
&\leq
2C_2
\left(
\int_0^t(1+s)G_\beta(s)\,ds
\right)^{\frac{1}{p}}
(1+t)^{1+\frac{p-1-\lambda}{p}}.
\end{align*}
We see from the definition of $H_\beta$ that
\begin{align*}
(2C_2)^{-p}(1+t)^{1+\lambda-2p}\left(
\ep E_{\beta,0}t+\ep \frac{E_{\beta,1}}{2} t^2
\right)^p
&\leq H_\beta'(t).
\end{align*}
Hence we have 
\[
H_\beta'(t)\geq 
C_4\ep^{p}(1+t)^{1+\lambda}, \quad t\geq 1. 
\] 
Integrating it over $[0,t]$, we have for $t\geq 2$, 
\[
H_\beta(t)
\geq 
\int_0^t H_\beta'(s)\,ds
\geq 
\int_1^t H_\beta'(s)\,ds
\geq 
C_4\ep^{p}\int_1^t (1+s)^{1+\lambda}\,ds
\geq 
\frac{C_4\ep^p}{2(2+\lambda)}(1+t)^{2+\lambda}. 
\]
We see from the definition of $I_\beta$ that for $t\geq 2$,
\[
J_\beta'(t)=(1+s)^{-3}H_\beta(t)\geq\frac{C_4\ep^p}{2(2+\lambda)}(1+t)^{-1+\lambda}.
\] 
and for $t\geq 4$,
\[
J_\beta(t)=
\int_2^t J_\beta'(s)\,ds
\geq 
\frac{(5^\lambda-3^\lambda)C_4\ep^p}{2\cdot 5^\lambda\lambda(2+\lambda)}
(1+t)^{\lambda}.
\]
On the other hand, 
we see from Lemma \ref{lem:trick} that
\[
(2C_2)^{-p}[J_\beta(t)]^p\leq 
(1+t)^{2-\lambda}J_\beta''(t)+3(1+t)^{1-\lambda} J_\beta'(t)]. 
\]
Moreover, setting 
$J_\beta(t)=\widetilde{J}_\beta(\sigma)$, $\sigma=\frac{2}{\lambda}(1+t)^{\frac{\lambda}{2}}$, 
we have
\[
(1+t)^{1-\frac{\lambda}{2}}J_\beta'(t)=\widetilde{J}_\beta'(\sigma), 
\quad 
(1+t)^{2-\lambda}J_\beta''(t)
+\frac{2-\lambda}{2}(1+t)^{1+\lambda}J_\beta'(t)
=
\widetilde{J}_\beta''(\sigma).
\]
Then 
\begin{gather*}
C_5^{-p}[\widetilde{J}_\beta(\sigma)]^p
\leq 
\widetilde{J}_\beta''(\sigma)
+
\frac{4+\lambda}{\lambda}\sigma^{-1} 
\widetilde{J}_\beta'(\sigma), 
\quad 
\sigma\geq \sigma_0
=
\frac{2}{\lambda}, 
\\
\widetilde{J}_\beta'(\sigma)
\geq 
C_6\ep^p\sigma, 
\quad
\sigma\geq \sigma_1=\frac{2}{\lambda}3^{\frac{\lambda}{2}}, 
\\
\widetilde{J}_\beta(\sigma)
\geq 
C_6\ep^p\sigma^2, 
\quad
\sigma\geq \sigma_2=\frac{2}{\lambda}5^{\frac{\lambda}{2}}.
\end{gather*}
Consequently, by Lemma \ref{lem:blowup} {\bf (i)} we deduce that 
$\widetilde{J}_\beta$ blows up before $C_7\ep^{-\frac{p-1}{2}}$ and then, 
$J_\beta$ blows up before $C_7\ep^{-\frac{p-1}{\lambda}}$. 
By virtue of Lemma \ref{lem:sufficient}, we have 
$\lifespan(u_\ep)\leq C_7\ep^{-\frac{p-1}{\lambda}}$.
Noting that
$1/q$ can be taken as $1/q=\sup S_N-\delta'$ with sufficiently small $\delta'>0$, 
we obtain that 
\[
\lifespan(u_\ep)\leq C_7\ep^{-\theta-\delta}, 
\]
where $\theta$ is given by
\[
\theta=(p-1)\left(\frac{\gamma(N+\mu;p)}{2p}-\frac{1}{p}+\sup S_N\right)^{-1}.
\]
Adding the characterization of $\sup S_N$, 
we complete the proof for subcritical case.
\end{proof}

\subsection{Proof of Theorem \ref{main} for critical case 
$p=p_0(N+\mu)$}
\begin{proof}
In this case we see from $\mu<\mu_*=\frac{N^2+N+2}{N+2}$ that 
$p<\frac{2}{N-|1-\mu|}$. 
Set
\[
\beta_\delta=\frac{N+1-\mu}{2}-\frac{1}{p+\delta}
>\max\{0,1-\mu\}
\]
for $\delta\geq 0$. 
Then by Lemma \ref{base2} {\bf (i)} with $\beta=\beta_\delta$, 
\begin{align*}
\ep E_{\beta_\delta,0}+\ep E_{\beta_\delta,1} t
&\leq 
C_1
\left[
\|u(t)\|_{L^p}
(2+t)^{\frac{N}{p'}+1-\beta_\delta}
+
\int_0^t
\|u(s)\|_{L^p}
(2+s)^{\frac{N}{p'}-\beta_0}
\,ds
\right]
\\
&\leq 
K_1
\left[
\left(G_{\beta_0}(t)\right)^{\frac{1}{p}}
(1+t)^{\frac{N+1-\beta_0}{p'}+(\beta_0-\beta_{\delta})}
+
\int_0^t
\left(G_{\beta_0}(t)\right)^{\frac{1}{p}}
(1+s)^{\frac{N+1-\beta_0}{p'}-1}
\,ds
\right].
\end{align*}
Noting that $\frac{N+1-\beta_0}{p'}=1+\frac{1}{p}$ and  
integrating it over $[0,t]$, we have 
\begin{align*}
\ep E_{\beta_\delta,0}t+\ep \frac{E_{\beta_\delta,1}}{2} t^2
&\leq 
K_1
\left[
\int_0^t \left(G_{\beta_0}(s)\right)^{\frac{1}{p}}
(1+s)^{1+\frac{1}{p}+(\beta_0-\beta_\delta)}\,ds
+
\int_0^t
(t-s)\left(G_{\beta_0}(t)\right)^{\frac{1}{p}}
(1+s)^{\frac{1}{p}}
\,ds
\right]
\\
&\leq 
K_1
\left(
\int_0^tG_{\beta_0}(s)(1+s)\,ds
\right)^{\frac{1}{p}}
\left[
\left(
\int_0^t 
(1+s)^{p'+(\beta_0-\beta_\delta)p'}\,ds
\right)^{\frac{1}{p'}}
+
\left(\int_0^t
(t-s)^{p'}
\,ds
\right)^{\frac{1}{p'}}
\right]
\\
&\leq 
K_2
\left(
\int_0^tG_{\beta_0}(s)(1+s)\,ds
\right)^{\frac{1}{p}}
(1+t)^{1+\frac{1}{p'}}.
\end{align*}
By the definition of $H_{\beta_0}$, we have
for $t\geq 1$, 
\[
H'_{\beta_0}(t)
\geq 
K_2^{-p}
\ep^p\left(E_{\beta_{\delta},0}t+\frac{E_{\beta_\delta,1}}{2} t^2\right)^p
(1+t)^{1-2p}
\geq K_3\ep^p (1+t)
\]
and then for $t\geq 2$, 
\[
H_{\beta_0}(t)\geq \int_1^t\tilde{G}'_{\beta_0}(s)\,ds
\geq 
K_4\ep^p (2+t)^2.
\]
On the other hand, by Lemma \ref{base2} {\bf (ii)} we have 
\begin{align*}
&
\int_0^t
(t-s)G_{\beta_0}(s)\,ds
\leq
C_1
\left[
\|u(t)\|_{L^p}
(1+t)^{\frac{N}{p'}+1-\beta_0}
+
\int_0^t
\|u(s)\|_{L^p}
(1+s)^{\frac{N}{p'}-\beta_0}(\log (1+s))^{\frac{1}{p'}}
\,ds
\right].
\end{align*}
Noting $\frac{N+1-\beta_0}{p'}=1+\frac{1}{p}$ again 
and integrating it over $[0,t]$, we have 
\begin{align*}
&
\frac{1}{2}\int_0^t
(t-s)^2G_{\beta_0}(s)\,ds
\\
&\leq
K_1'
\left[
\int_0^t
G_{\beta_0}(s)^\frac{1}{p}
(1+s)^{\frac{N+1-\beta_0}{p'}}\,ds
+
\int_0^t
(t-s)G_{\beta_0}(s)^\frac{1}{p}
(1+s)^{\frac{N+1-\beta_0}{p'}-1}(\log (2+s))^{\frac{1}{p'}}
\,ds
\right]
\\
&\leq 
K_1'H_{\beta_0}'(t)^{\frac{1}{p}}
\left[
\int_0^t
(1+s)^{p'}\,ds
+
\int_0^t
(t-s)^{p'}
\log (1+s)
\,ds
\right]
\\
&\leq 
K_2'H_{\beta_0}'(t)^{\frac{1}{p}}
(1+t)^{1+\frac{1}{p'}}(\log (2+t))^\frac{1}{p'}.
\end{align*}
As in the proof of subcritial case, 
we deduce
\begin{align*}
(K_2')^{-p}(\log(1+t))^{1-p}J_{\beta_0}(t)^{p}
&\leq H_{\beta_0}'(t)(1+t)^{-1}
\\
&\leq (1+t)^2 J_{\beta_0}''(t)+3(1+t) J_{\beta_0}'(t).
\end{align*}
Here we take $J_{\beta_0}(t)=\widetilde{J}_{\beta_0}(\sigma)$ 
with $\sigma =\log (1+t)$. Since 
\[
(1+t)J_{\beta_0}'(t)=\widetilde{J}_{\beta_0}'(\sigma), 
\quad 
(1+t)^2J_{\beta_0}'(t)+(1+t)J_{\beta_0}''(t)=\widetilde{J}_{\beta_0}''(\sigma),
\]
we obtain for $\sigma\geq \sigma_0:=\log 2$, 
\[
(K_2')^{-p}\sigma^{1-p}\widetilde{J}_{\beta_0}(\sigma)^p
\leq \widetilde{J}_{\beta_0}''(\sigma)+2\widetilde{J}_{\beta_0}'(\sigma).
\]
Moreover, we have 
for $\sigma \geq \sigma_1=\log 4$, 
\begin{align*}
\widetilde{J}_{\beta_0}'(\sigma)
&=(1+t)J_{\beta_0}'(t)
\\
&=(1+t)^{-2}H_{\beta_0}(t)
\\
&\geq K_4\ep^p
\end{align*}
and therefore for $\sigma \geq \sigma_2=2\log 4$, 
\[
\widetilde{J}_{\beta_0}(\sigma)\geq \frac{K_4}{2}\ep^p\sigma. 
\]
Applying Lemma \ref{lem:blowup} {\bf (ii)} we deduce that 
$\widetilde{J}_{\beta_0}$ blows up before $\sigma=K_5\ep^{-p(p-1)}$ 
when $\ep>0$ is sufficiently small. 
Then by definition, the function $J_{\beta_0}$ blows up before $\exp[K_5\ep^{-p(p-1)}]$. 
Consequently, using Lemma \ref{lem:sufficient}, we obtain 
\[
\lifespan (u_\ep)\leq \exp[K_5\ep^{-p(p-1)}].
\]
The proof is complete. 
\end{proof}

\subsection*{Acknowedgements}
This work is partially supported 
by Grant-in-Aid for Young Scientists Research (B) 
No.16K17619 
and 
by Grant-in-Aid for Young Scientists Research (B) 
No.15K17571.


\end{document}